\newtheorem{thm}{Theorem}[section]
\newtheorem{lem}[thm]{Lemma}
\newcommand{\thmref}[1]{Theorem~\ref{#1}}
\newcommand{\lemref}[1]{Lemma~\ref{#1}}
\theoremstyle{remark}
\newtheorem{rmk}{Remark}[section]
\begin{document}

\title[Asymptotic for the power moment ]
{Average behaviour  of  Hecke eigenvalues over certain polynomial}

\author{Lalit Vaishya}
\address{Current Address: The Institute of Mathematical Sciences (A CI of Homi Bhabha
National Institute)
               CIT Campus, Taramani,
               Chennai - 600 113
               Tamilnadu 
               India.}
               \email{lalitvaishya@gmail.com, lalitv@imsc.res.in}

\subjclass[2010]{Primary 11F30, 11F11, 11M06; Secondary 11N37}
\keywords{ Fourier coefficients of cusp form,  Symmetric power $L$ functions,  Asymptotic behaviour}

 
\maketitle

\begin{abstract}

In the article, we investigate the average behaviour of normalised Hecke eigenvalues over certain polynomials and establish an estimate for the power moments of the normalised Hecke eigenvalues of a normalised Hecke eigenform of weight $k \ge 2$ for the full modular group $SL_2(\mathbb{Z})$ over certain polynomial, given by a sum of triangular numbers with certain positive coefficients. More precisely, for each $r \in \mathbb{N}$, we obtain an asymptotic for the following sum
\begin{equation*}
\begin{split}
\displaystyle{\sideset{}{^{\flat }}\sum_{ \alpha(\underline{x}))+1 \le X \atop \underline{x} \in {\mathbb Z}^{4}} } \lambda_{f}^{r}(\alpha(\underline{x})+1) , \\
\end{split}
\end{equation*}
where $\displaystyle{\sideset{}{^{\flat }}\sum}$ means that the sum runs over the square-free positive  integers, and $\lambda_{f} (n)$ is the normalised $n^{\rm th}$-Hecke eigenvalue of a normalised Hecke eigenform  $f \in S_{k}(SL_2(\mathbb{Z}))$,  and   $\alpha(\underline{x}) = \frac{1}{2} \left( x_{1}^{2}+ x_{1} +  x_{2}^{2} + x_{2} + 2 ( x_{3}^{2} + x_{3}) + 4 (x_{4}^{2} + x_{4}) \right) \in {\mathbb Q}[x_{1},x_{2},x_{3},x_{4}] $ is a polynomial,  and $\underline{x} = (x_{1},x_{2},x_{3},x_{4})  \in {\mathbb Z}^{4}$. 
\end{abstract}

\section{Introduction}

Let $S_{k}(SL_2(\mathbb{Z}))$ denote the ${\mathbb C}$-vector space of cusp forms of weight $k$ for the full modular group $SL_2(\mathbb{Z})$. A cusp form $f \in S_{k}(SL_2(\mathbb{Z}))$ is said to be a Hecke eigenform if $f$ is a simultaneous eigenfunction for all the Hecke operators. Let  $a_{f}(n)$ denote the  $n^{\rm th}$ Fourier coefficient of a cusp form $f \in S_{k}(SL_2(\mathbb{Z}))$. A cusp form $f$ is said to be normalised if $a_f(1)=1$. We define the normalised $n^{\rm th}$ Fourier coefficients $ \lambda_f(n)$ given by; $ \lambda_f(n) := {a_{f}(n)}/{n^{\frac{k-1}{2}}}$.  The normalised Fourier coefficient $\lambda_{f}(n)$ is a multiplicative function and satisfies the following Hecke relation \cite[Eq. (6.83)]{Iwaniec}:
\begin{equation}
\lambda_{f}(m)\lambda_{f}(n) = \sum_{d \vert m,n }  \lambda_{f}\left(\frac{mn}{d^2}\right),  
\end{equation}
 for all positive integers $m$ and $n.$ Ramanujan conjecture predicts that $|\lambda_{f}(p)| \le 2$.  It has been established in a pioneer work of  Deligne. More precisely, we have  
\begin{equation} \label{lambda-coefficient-bound}
|\lambda_{f}(n)| \le d(n) \ll_{\epsilon} n^{\epsilon}, 
\end{equation}
for  any arbitrary small  $\epsilon > 0,$ where $d(n)$ denotes the number of positive divisors of $n$. 

The study of the average behavior of arithmetical functions attracts many researchers.   It is a well-known approach in analytic number theory to examine the moments of arithmetical functions to understand the behavior of arithmetical functions.  In this regard, Fourier coefficients of cuspidal automorphic forms (more precisely, Fourier coefficients of classical cusp forms) are one of the interesting hosts. Moreover, the average behaviour of arithmetical functions over certain sequences is riveting but quite mysterious. The randomness in the behavior of Fourier coefficients of classical cusp forms leads to many equidistribution results. There are very interesting results on the distribution of Fourier coefficients over certain sparse set of natural numbers.  For example,  Iwaniec and Kowalski \cite{HIwaniec} studied the distribution of $\{\lambda_{f}(p): p- {\rm prime}\}$ and established an analogue of prime number theorem for classical Hecke eigenforms, and  Blomer \cite{blomer} considered the distribution of the sequence $\{ \lambda_{f}(q(n)): n \in \mathbb{N}\},$ where $q(x)$ is a monic quadratic polynomial. More precisely, he proves that 
$$
\displaystyle{\sum_{n\le X}} \lambda_{f}(q(n)) \ll X^{\frac{6}{7}+\epsilon},
$$
for any $\epsilon>0.$ For a polynomial with more than one variable, the problem has been studied broadly for many arithmetic functions. A two-variables analogue of the sum studied in the work of Blomer \cite{blomer}, has been studied by  Banarjee-Pandey \cite{manish-soumya} and Acharya \cite{Ratna}.  More precisely,  they studied  the distribution of $\{\lambda_{f}(q(a,b))\}$ where $q(x,y) = x^2+y^2$, and obtained an estimate for the summatory function  $\displaystyle{\sum_{k,l \in \mathbb{Z} \atop k^2+l^2 \le X}} \lambda_{f}(q(k,l)).$   In our previous work (See \cite{ {Lalit}, {LalitV}}), we study the average behaviour of Hecke eigenvalue $\lambda_{f}(n)$, supported at the integers represented by  primitive integral positive definite binary quadratic forms of fixed negative discriminant $D$., In a joint work with M.K. Pandey \cite{Manish-Lalit}, we study the higher power moments of $\lambda_{f}(n)$, over the set of integers supported at the integers represented by primitive integral positive definite binary quadratic forms of fixed negative discriminant $D$. More  precisely, we obtain  an  estimate for the sum (for each fixed $r \in \mathbb{N}$ and sufficiently large  $X \ge 1$) 
\begin{equation*}
\begin{split}
\displaystyle{\sum_{\underline{x} \in {\mathbb Z}^{2} \atop Q(\underline{x}) \le X} (\lambda_{f}(Q(\underline{x})))^{r}}, \\
\end{split}
\end{equation*}
 where $\lambda_{f}(n)$ is the $n^{th}$  normalised Fourier coefficient of a Hecke eigenform $f$ and  $Q(\underline{x})$ is a primitive integral positive definite binary quadratic form (reduced form) of fixed negative discriminant $D$ with the class number $h(D) =1.$ This was a generalisation of the previous  result proved in   \cite{{Lalit}, {LalitV}}.  As a consequence, we established and improved previous results on the behaviour of sign change of $\lambda_{f}(n)$ in a short interval. 
 
 In this article, we consider the polynomial  
\begin{equation}\label{Poly}
\begin{split} 
 \alpha(\underline{x}) = \frac{1}{2} \left( x_{1}^{2}+ x_{1} +  x_{2}^{2} + x_{2} + 2 ( x_{3}^{2} + x_{3}) + 4 (x_{4}^{2} + x_{4}) \right) \in {\mathbb Q}[x_{1},x_{2},x_{3},x_{4}],
 \end{split}
\end{equation}
   and study the power moment of Hecke eigenvalues $\lambda_{f}(n)$'s over the integers which are represented by $\alpha(\underline{x})$. 

\smallskip
 We fix a few more notations and state our results. 

Let $M_{k}(\Gamma_{0}(N), \chi)$ denote the space of modular forms of weight $k$ for the congruence subgroup $\Gamma_{0}(N)$ with the nebentypus $\chi$ (a character defined on the congruence subgroup $\Gamma_{0}(N)$). For the characters $\chi$ and  $\psi$ of modulus $N$, we define the generalised divisor function given by
 \begin{equation*}
\begin{split}
\sigma_{k; \chi, \psi}( n)  = \displaystyle{\sum_{ d \mid n}} \psi(d) \chi(n/d) d^{k}. 
\end{split}
\end{equation*}
Let  ${E}_{k; \chi, \psi}$ denote an Eisenstein series in $M_{k}(\Gamma_{0}(N), \chi \psi)$ given by
 \begin{equation*}
\begin{split}
{E}_{k; \chi, \psi} (\tau) = \displaystyle{\sum_{n = 0}^{\infty}} \sigma_{k-1; \chi, \psi} ( n) q^{n}. 
\end{split}
\end{equation*}

\smallskip
Let  $\alpha(\underline{x})  \in {\mathbb Q}[x_{1},x_{2},x_{3},x_{4}] $ be  a polynomial given in \eqref{Poly}. The polynomial $\alpha(\underline{x})$ takes positive integer values at the integral point of $\mathbb{Q}^{4}$. Let $\delta_{4}(\alpha; n)$ denote the number of integral representations of a positive integer $n$ represented by a polynomial $\alpha$, i.e., 
$$
\delta_{4}(\alpha; n) =  \# \{\underline{x} \in  {\mathbb Z}^{4} : n= \alpha(\underline{x})  \}.
$$
Associated to a polynomial  $\alpha(\underline{x})$, we define the generating function $T(\tau)$ of  $\delta_{4}(\alpha; n)$  given by
\begin{equation*}
\begin{split}
T(\tau) & := \displaystyle{\sum_{\underline{x} \in {\mathbb Z}^{4} } q^{(\alpha(\underline{x}))}}  \quad = \quad  \displaystyle{\sum_{n = 0}^{\infty} \delta_{4}(\alpha; n) q^{n}}, \qquad   \quad ~~~ q = e^{2 \pi i \tau}, \tau \in \mathbb{H} \\
\end{split}
\end{equation*} 
where $\mathbb{H}$ denotes the complex upper plane. The function $q T(\tau)$ is a modular form of weight $2$ and  level $8$ with nebentypus $\chi_{8}$ \cite[Remark 1.2]{Ramki-Lalit}, where $\chi_{8}$ is a character given by Jacobi symbol, $\chi_{8} (n) := \left( \frac{8}{n} \right)$. It is well-known that  the modular space $M_{2}(\Gamma_{0}(8), \chi_{8})$ is generated by the generalised Eisenstein series ${E}_{2; \chi_{8}, \textbf{1}}$ and ${E}_{2; \textbf{1},\chi_{8}}$ where $\textbf{1}$ denotes the trivial character of modulus $8$. Then,  it is easy to see that  
$ q T(\tau) =  {E}_{2; \chi_{8}, \textbf{1}} (\tau).$ By comparing the $n^{\rm th}$ Fourier coefficients,
we have 
 \begin{equation}\label{RepF}
\begin{split}
\delta_{4}(\alpha; n-1) = \sigma_{1; \chi_{8}, \textbf{1}}(n) = \displaystyle{\sum_{ d \mid n}} ~ \chi_{8}(d) ~\frac{n}{d}.
\end{split}
\end{equation}
\begin{rmk}
Let 
$\alpha_{1}(\underline{x}) = x_{1}^{2} +2x_{2}^{2}  +  2 ( x_{3}^{2} + x_{3}) + 2 (x_{4}^{2} + x_{4}) \in {\mathbb Z}[x_{1},x_{2},x_{3},x_{4}] $ and \linebreak  $\alpha_{2}(\underline{x}) = x_{1}^{2} +  ( x_{2}^{2} + x_{2}) +  ( x_{3}^{2} + x_{3}) + 2 (x_{4}^{2} + x_{4}) \in {\mathbb Z}[x_{1},x_{2},x_{3},x_{4}] $ be the polynomials.
From the theory of modular forms, it is easy to see that the number of integral representations $R_{4}(\alpha_{1}; n-1)$ (resp. $R_{4}(\alpha_{2}; n-1)$) of a positive integer $n$ represented by the  polynomial $\alpha_{1}$ (resp. $\alpha_{2}$) is given by
 \begin{equation*}
\begin{split}
R_{4}(\alpha_{1}; n-1) & = \displaystyle{\sum_{ d \mid n}} ~ \chi_{8}(d) ~\frac{n}{d} \qquad \text{and} \qquad
R_{4}(\alpha_{2}; n-1)  = \displaystyle{\sum_{ d \mid n}} ~ \chi_{8}(d) ~\frac{n}{d}.
\end{split}
\end{equation*}
\end{rmk}
\noindent
 Let $\lambda_{f} (n)$ denote  the normalised $n^{\rm th}$-Hecke eigenvalue of a normalised Hecke eigenform \linebreak   $f \in S_{k}(SL_2(\mathbb{Z}))$.   For  each fixed $  r \in \mathbb{N}$ and $X \ge 1$, we define the following power sum:
\begin{equation}\label{Sum-PIBQF}
\begin{split}
S_{r}(X) := \displaystyle{\sideset{}{^{\flat }}\sum_{ \alpha(\underline{x})) +1 \le X \atop \underline{x} \in {\mathbb Z}^{4}} } \lambda_{f}^{r}(\alpha(\underline{x})+1) \\
\end{split}
\end{equation}
where  $\alpha(\underline{x}) \in {\mathbb Q}[x_{1},x_{2},x_{3},x_{4}] $ is a polynomial defined in \eqref{Poly}. 

\bigskip
With these notations, we state our results.
\begin{thm}\label{PolyEst}
Let  $\epsilon >0$ be an arbitrarily small. For sufficiently large $X$,  we have
\begin{equation} \label{SR-Est1}
\begin{split}
S_{1}(X) &=  O_{f,D,\epsilon}(X^{\frac{3}{2}+\epsilon})
\end{split}
\end{equation}
and
\begin{equation} \label{SR-Est2}
\begin{split}
S_{2}(X) &= C X^{2}  + O(X^{\frac{8}{5}+\epsilon})\\
\end{split}
\end{equation}
where $C$ is a positive absolute constant.
\end{thm}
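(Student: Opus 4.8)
The plan is to reduce the sums $S_r(X)$ to standard sums involving $\lambda_f^r$ weighted by the representation function, and then invoke known mean-value estimates for $\lambda_f^r$ over square-free integers. By the identity \eqref{RepF}, the number of representations of $n-1$ by $\alpha$ equals $\sigma_{1;\chi_8,\mathbf 1}(n)=\sum_{d\mid n}\chi_8(d)\,n/d$, so
\begin{equation*}
S_r(X)=\sideset{}{^{\flat}}\sum_{n\le X}\lambda_f^r(n)\,\sigma_{1;\chi_8,\mathbf 1}(n).
\end{equation*}
Writing $n=de$ with $\sigma_{1;\chi_8,\mathbf 1}(n)=\sum_{d\mid n}\chi_8(d)(n/d)$, I would split off the main contribution. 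Since we are on square-free $n$, we have $n=de$ with $(d,e)=1$ and $\lambda_f(n)=\lambda_f(d)\lambda_f(e)$, and one can write $S_r(X)=\sideset{}{^{\flat}}\sum_{de\le X,\,(d,e)=1}\chi_8(d)\,e\,\lambda_f^r(d)\lambda_f^r(e)$. The term $d=1$ gives $\sideset{}{^{\flat}}\sum_{e\le X} e\,\lambda_f^r(e)$, and partial summation turns this into $X$ times a mean value of $\lambda_f^r$ over square-free integers up to $X$.

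For $r=1$: the relevant mean value is $\sideset{}{^{\flat}}\sum_{e\le X}\lambda_f(e)\ll X^{1/2+\epsilon}$, which follows from the analytic properties (holomorphy and polynomial growth on vertical lines) of $L(s,f)$ together with the fact that the square-free restriction corresponds to dividing by a factor like $L(2s,\mathrm{sym}^2 f)^{-1}$ up to controllable pieces; a contour shift (Perron's formula plus the convexity bound) then yields the exponent $1/2+\epsilon$ for the inner sum, and, after partial summation and summing over $d\le X$ with the trivial bound $\lambda_f(d)\ll d^\epsilon$, gives $S_1(X)\ll X\cdot X^{1/2+\epsilon}\cdot X^\epsilon = X^{3/2+\epsilon}$; more carefully one organizes the $d$-sum so no loss of a full power of $X$ occurs, giving the stated $O(X^{3/2+\epsilon})$.

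For $r=2$: here $\lambda_f^2(n)=\sum_{d\mid n}\lambda_{f,\mathrm{sym}^2}(d)$ (the Hecke relation), so the Dirichlet series of $\lambda_f^2(n)n^{-s}$ is $L(s,\mathrm{sym}^2 f)\zeta(s)/\zeta(2s)$ up to Euler factors; the key point is the pole at $s=1$ coming from $\zeta(s)$, whose residue — combined with the arithmetic factor from $\sigma_{1;\chi_8,\mathbf 1}$ and the square-free sieve — produces the main term $CX^2$. Concretely I would study $D(s)=\sum_{n}^{\flat}\lambda_f^2(n)\sigma_{1;\chi_8,\mathbf 1}(n)n^{-s}$, identify it as $L(s-1,\mathrm{sym}^2f\otimes\chi_8)\,\zeta(s-1)\,\cdot(\text{correction Euler product})$ times the square-free-sieving factor $1/\zeta(2(s-1))$-type object, locate its rightmost pole (a simple pole at $s=2$ from $\zeta(s-1)$, since $L(s-1,\mathrm{sym}^2 f\otimes\chi_8)$ is entire), and apply Perron's formula. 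Shifting the contour past $s=2$ picks up the residue $CX^2$ with $C>0$; the error term is controlled by the convexity bound for $L(\mathrm{sym}^2 f\otimes\chi_8)$ on the critical-strip-shifted line, optimizing the truncation parameter $T$ in Perron's formula to balance $X^2/T$ against the integral, which yields the exponent $8/5+\epsilon$.

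The main obstacle is the $r=2$ error term: getting $8/5$ rather than merely $9/5$ requires either a subconvex-type input or, more likely here, exploiting cancellation in the off-diagonal / shifted-divisor terms that arise when one does not simply bound the symmetric-square $L$-function by convexity but instead uses the Voronoï summation formula for $\lambda_f^2$ (equivalently for $\lambda_{f,\mathrm{sym}^2}$) and then estimates the resulting exponential sums; balancing the lengths in that Voronoï step against the square-free sieve (which contributes an extra divisor-type convolution) is the delicate computation. I would also need to check uniformity in the level $8$ and the conductor of $\chi_8$, but since these are absolute constants this does not affect the exponents. The case $r\ge 3$ is not claimed in this theorem, so I would not address the higher symmetric powers here.
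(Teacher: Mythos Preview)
Your reduction to $S_r(X)=\sideset{}{^{\flat}}\sum_{n\le X}\lambda_f^r(n)\,\sigma_{1;\chi_8,\mathbf 1}(n)$ is the same starting point as the paper. After that, the two arguments diverge.

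For $r=1$ you propose an elementary route: open $\sigma_{1;\chi_8,\mathbf 1}$, use multiplicativity on square-free $n$, and feed in $\sideset{}{^{\flat}}\sum_{m\le Y}\lambda_f(m)\ll Y^{1/2+\epsilon}$ via partial summation. This is different from the paper, which studies the Dirichlet series $R_1(s)$, factors it as $L(s-1,f)L(s,f\times\chi_8)U_1(s)$, applies a smooth Mellin cutoff, shifts to $\Re(s)=3/2+\epsilon$, and bounds the resulting integral using the second moment $\int_0^T|L(1/2+it,f)|^2\,dt\ll T^{1+\epsilon}$. Your approach is in principle workable and more elementary, but the sketch as written is incomplete: the step ``more carefully one organizes the $d$-sum so no loss of a full power of $X$ occurs'' is exactly the point that needs an argument, and you have not given one.

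For $r=2$ there is a genuine gap. First, your factorisation of the Dirichlet series is not right: the paper shows
\[
R_2(s)=\zeta(s-1)\,L(s,\chi_8)\,L(s-1,\mathrm{sym}^2 f)\,L(s,\mathrm{sym}^2 f\times\chi_8)\,U_2(s),
\]
so the $\chi_8$-twist sits on the factor at $s$, not on the factor at $s-1$, and the (simple) pole at $s=2$ comes from $\zeta(s-1)$ with the other three $L$-values entering the constant $C$. Second, and more importantly, your assertion that obtaining the exponent $8/5$ ``requires either a subconvex-type input or \dots\ the Vorono\"\i\ summation formula'' is incorrect. The paper reaches $8/5$ with only standard mean-value input: on the line $\Re(s)=3/2+\epsilon$ the relevant integrand is $|\zeta(1/2+\epsilon+it)\,L(1/2+\epsilon+it,\mathrm{sym}^2 f)|/t$; by dyadic subdivision and Cauchy--Schwarz together with $\int_0^{T}|\zeta(1/2+it)|^2\,dt\ll T^{1+\epsilon}$ and the degree-$3$ bound $\int_0^{T}|L(1/2+it,\mathrm{sym}^2 f)|^2\,dt\ll T^{3/2+\epsilon}$, one gets $\int_1^T(\cdots)\,dt\ll T^{1/4+\epsilon}$. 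The paper then balances $X^{3/2+\epsilon}T^{1/4}$ against the smoothing error $X^{1+\epsilon}Y$ (with $T=X^{1+\epsilon}/Y$), choosing $Y=X^{3/5}$ to obtain $O(X^{8/5+\epsilon})$. No subconvexity for $\mathrm{sym}^2 f$, no Vorono\"\i, and no shifted-convolution analysis is used. Your plan to use Perron with a sharp cutoff and then appeal to Vorono\"\i\ would be a much harder path to the same exponent; the missing idea in your proposal is precisely the combination of a \emph{smooth} cutoff (which replaces the Perron term $X^2/T$ by the milder $X^{1+\epsilon}Y$) with the $L^2$-mean of the degree-$3$ $L$-function.
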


\begin{thm}\label{Estimate-TR}
Let $\epsilon >0$ be an arbitrarily small.  For each $ r \ge 3$  and sufficiently large $X$,  we have the following estimates for $S_{r}(X)$.
\begin{equation} \label{SR-Est}
\begin{split}
S_{r}(X) = X^{2} P_{r}(\log X) + O(X^{2- \frac{1}{2(1+ \gamma_{r})}+\epsilon}),\\
\end{split}
\end{equation}
where for each $r = 2m$ $(m \ge 2),$  $P_{r}(t)$ is polynomial of degree $ d_{r} = \frac{1}{m} {r \choose m} -1 $ and
$$\gamma_{r} = \frac{13}{82m} {2m \choose m-1 } + \frac{15}{8(m-1)} {2m \choose m-2 } + \frac{1}{4} \left[ \displaystyle{\sum_{n=0}^{m-2}\frac{(2m-2n+1)^{2}}{n} {2m \choose n-1}} \right],$$
and for each $r = 2m+1$ $(m \ge 1)$,  $P_{r}(t) \equiv 0$ and with 
$$\gamma_{r} =  \frac{2}{3m} {2m+1 \choose m-1 }  + \frac{1}{4} \left[ \displaystyle{\sum_{n=0}^{m-1}\frac{(2m+1-2n+1)^{2}}{n} {2m+1 \choose n-1}} \right]-\frac{5}{6}.$$

 \end{thm}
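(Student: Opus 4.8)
The plan is to reduce the sum $S_r(X)$ over square-free integers represented by $\alpha$ to a sum over \emph{all} square-free integers $n \le X$ weighted by the representation count $\delta_4(\alpha; n-1) = \sigma_{1;\chi_8,\mathbf 1}(n)$, using the identity \eqref{RepF}. Thus
\[
S_r(X) = \sideset{}{^{\flat}}\sum_{n \le X} \lambda_f^r(n)\, \sigma_{1;\chi_8,\mathbf 1}(n),
\]
and since $\sigma_{1;\chi_8,\mathbf 1}(n) = \sum_{d\mid n}\chi_8(d)\,n/d \ll n^{1+\epsilon}$ but on average is of size $cn$, the main term will have size $X^2$ (up to logarithmic factors), matching the statement. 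The first step is therefore to open the divisor function and write $S_r(X) = \sum_{d}\chi_8(d)\sum_{de \le X}^{\flat}\!(de)\,\lambda_f^r(de)/d$; by multiplicativity of $\lambda_f$ and square-freeness one separates (coprime) $d$ and $e$, so the inner object is controlled by the Dirichlet series $\sum_n \lambda_f^r(n)\,\mu^2(n)\, n^{-s}$ twisted appropriately.

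The analytic heart is to understand $L_r(s) := \sum_{n\ge 1}\lambda_f^r(n)\,\mu^2(n)\, n^{-s}$ and its twist by $\chi_8$. Writing $\lambda_f^r(p)$ as a linear combination of $\lambda_{\mathrm{sym}^j f}(p)$ for $0\le j\le r$ via the standard Clebsch--Gordan / Chebyshev decomposition of $(\mathrm{sym}^1)^{\otimes r}$, one sees that $L_r(s)$ factors, up to an Euler product absolutely convergent for $\Re s > 1/2$, as $\prod_{j=0}^{r} L(\mathrm{sym}^j f, s)^{m_{j,r}}$ where $m_{0,r}$ is the multiplicity of the trivial representation in $(\mathrm{sym}^1)^{\otimes r}$. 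For $r = 2m$ this multiplicity is the Catalan-type number $\frac1m\binom{r}{m}$, which is exactly $d_r + 1$, so $L_r(s)$ has a pole of order $d_r+1$ at $s=1$ coming from $\zeta(s)^{m_{0,r}}$ (after accounting for the $\chi_8$-twist, which is trivial on the principal part since $\chi_8$ is non-principal only in the symmetric-power factors, not the zeta factor); for $r$ odd the trivial representation does not occur, $m_{0,r}=0$, and there is no pole, giving $P_r \equiv 0$. The polynomial $P_r(\log X)$ then arises by a contour shift / Perron's formula, its degree being one less than the order of the pole, i.e. $d_r$.

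For the error term one invokes known subconvexity/zero-free-region information for the symmetric power $L$-functions $L(\mathrm{sym}^j f, s)$, $j \le r$ (automorphy of $\mathrm{sym}^j f$ for small $j$ is now known unconditionally, and for the relevant range one uses the Kim--Shahidi / Newton--Thorne results together with Rankin--Selberg bounds), to push the contour slightly left of $\Re s = 1$ and bound the resulting integral. The exponent $2 - \frac{1}{2(1+\gamma_r)}$ with the explicit $\gamma_r$ is precisely the outcome of balancing (i) the convexity bound contributions from each factor $L(\mathrm{sym}^j f, s)$ raised to its multiplicity $m_{j,r}$ on the critical line, the analytic conductor of $\mathrm{sym}^j f$ contributing the weights $\frac{(2m - 2n + \cdots)^2}{n}\binom{\cdots}{n-1}$ visible in the formula for $\gamma_r$, and (ii) the improved subconvex exponents $\tfrac{13}{82}$ (for $\mathrm{sym}^2$-type, i.e.\ the $GL_3$ subconvexity of order $\tfrac{13}{42}$ scaled) and $\tfrac{2}{3}$ (for the $GL_2$ Weyl-type bound) for the lowest-degree non-trivial factors, together with the $\tfrac{15}{8}$ weight for the next factor. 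Concretely: apply Perron, shift to $\Re s = 1/2 + \delta$, estimate $\int |L_r(1/2+\delta+it)|\,\frac{X^{1/2+\delta}}{|1/2+\delta+it|}\,dt$ via Cauchy--Schwarz and the fourth-moment/mean-value bounds for each symmetric-power factor, and optimize $\delta$ against the length $X$ of the Perron sum; the optimal $\delta$ is $\sim \frac{1}{2(1+\gamma_r)}$, producing the stated error $X^{2-1/(2(1+\gamma_r))+\epsilon}$ after accounting for the overall factor $n$ (which shifts everything by one in the $s$-variable, turning $X^{1/2}$-type savings into $X^{3/2}$-type and the main pole from $s=1$ contributing $X^2$).

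The main obstacle is Step two combined with the error analysis: one must verify that the Euler factor "remainder" $\prod_p (\text{local factor})$ in $L_r(s) = \big(\text{product of symmetric power }L\text{'s}\big)\cdot G_r(s)$ is genuinely holomorphic and bounded for $\Re s > 1/2$ (so that it does not interfere with the pole structure or the contour shift), and that the multiplicities $m_{j,r}$ and the conductor bookkeeping reproduce the exact arithmetic expression for $\gamma_r$ — in particular the appearance of the harmonic-type weights $\frac{1}{n}\binom{2m}{n-1}$, which are the multiplicities of $\mathrm{sym}^{2(m-n)+\cdots}f$ in the tensor power (a Pieri/hook-length computation). Handling the square-free restriction cleanly (via the Möbius $\mu^2(n) = \sum_{d^2 \mid n}\mu(d)$ and swapping summations, at the cost of a harmless extra Euler factor at each prime) is routine, as is the translation from $\lambda_f^r$-weights to symmetric-power $L$-functions; the delicate part is the uniform analytic control of $G_r(s)$ and the optimization that yields the precise exponent.
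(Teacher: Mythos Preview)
Your overall strategy---reduce to $\sideset{}{^{\flat}}\sum_{n\le X}\lambda_f^r(n)\,\sigma_{1;\chi_8,\mathbf 1}(n)$, factor the associated Dirichlet series through symmetric-power $L$-functions via the Chebyshev/Clebsch--Gordan decomposition of $\lambda_f(p)^r$, extract a pole of order $d_r+1$ when $r$ is even, and bound the remaining contour integral by subconvexity/mean-value input---matches the paper. But two aspects of your execution diverge from what the paper actually does, and one of them matters.

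First, you open $\sigma_{1;\chi_8,\mathbf 1}(n)$ into a double sum over coprime $d,e$ and then study $\sum_n \lambda_f^r(n)\mu^2(n)n^{-s}$ and its $\chi_8$-twist. The paper does \emph{not} do this: it keeps the product $\lambda_f^r(n)\sigma_{1;\chi_8,\mathbf 1}(n)$ intact and forms the single Dirichlet series $R_r(s)=\sideset{}{^{\flat}}\sum_n \lambda_f^r(n)\sigma_{1;\chi_8,\mathbf 1}(n)n^{-s}$. Since $\sigma_{1;\chi_8,\mathbf 1}(p)=p+\chi_8(p)$, the local factor at $p$ immediately splits as (shift by $1$) $\times$ ($\chi_8$-twist), giving $R_r(s)=U_r(s)\prod_n L(s-1,\mathrm{sym}^{r-2n}f)^{e_n}L(s,\mathrm{sym}^{r-2n}f\times\chi_8)^{e_n}$ with $U_r$ holomorphic and bounded for $\Re s>3/2$. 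This is cleaner than your route and avoids the coprimality bookkeeping.

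Second, and more substantively: you propose to shift the contour to $\Re s=\tfrac12+\delta$ and \emph{optimize $\delta$}. The paper instead fixes the contour at $\Re s=\tfrac32+\epsilon$ (the edge of absolute convergence of $U_r$), uses a smooth weight $w$ with support parameter $Y$, and bounds $\int_1^T t^{-1}|L_r(\tfrac32+\epsilon+it)|\,dt\ll T^{\gamma_r+\epsilon}$ via dyadic subdivision plus Cauchy--Schwarz. The exponent $2-\tfrac{1}{2(1+\gamma_r)}$ then arises by balancing $X^{3/2+\epsilon}T^{\gamma_r}$ against the smoothing error $X^{1+\epsilon}Y$ with $T=X^{1+\epsilon}/Y$, \emph{not} by optimizing a contour abscissa. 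Your mechanism for producing the exponent is not the one used, and since $U_r$ is only controlled for $\Re s>3/2$ you cannot in fact push further left in the $R_r$-variable.

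Finally, a small bookkeeping slip: the $\tfrac{13}{82}$ in $\gamma_{2m}$ comes from Bourgain's bound for $\zeta$ (the $\mathrm{sym}^0$ factor at $n=m$), not from a $GL_3$/$\mathrm{sym}^2$ bound; the $\tfrac{15}{8}$ term is the one coming from $\mathrm{sym}^2 f$, and the $\tfrac{2}{3}$ you mention enters only in the odd case through Good's bound for $L(s,f)$.
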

 
 \begin{rmk}
 One can obtain exactly same result as in \thmref{PolyEst} and \thmref{Estimate-TR} for the polynomial $\alpha_{1}$ and $\alpha_{2}$ in place of $\alpha$.
\end{rmk}
Throughout the paper, $\epsilon$ denotes an arbitrarily small positive constant but not necessarily the same one at each place of occurrence.

\section{Key ingredients}

The sums defined in \eqref{Sum-PIBQF} can be expressed in terms of known arithmetical functions using \eqref{RepF}, i.e.,
\begin{equation}\label{Sum-T} 
\begin{split}
S_{r}(X) &= \displaystyle{\sideset{}{^{\flat }}\sum_{ \alpha(\underline{x}))+1 \le X \atop \underline{x} \in {\mathbb Z}^{4}} } \lambda_{f}^{r}(\alpha(\underline{x})+1) =  \displaystyle{\sideset{}{^{\flat }} \sum_{n \le X }} \left( \lambda_{f}^{r}(n) \left(\sum_{ n= \alpha(\underline{x}) +1 } 1\right) \right) =   \displaystyle{\sideset{}{^{\flat }} \sum_{n \le X }} \lambda_{f}^{r}(n)\delta_{4}(\alpha; n-1)  \\
& =   \displaystyle{\sideset{}{^{\flat }} \sum_{n \le X }} \lambda_{f}^{r}(n)  \sigma_{1; \chi_{8}, \textbf{1}}(n) \\
\end{split}
\end{equation}
We define the following Dirichlet series associated to the sum $S_{r}(X) $ given by: 
\begin{equation}\label{R-L-function}
\begin{split}
R_{r}(s) &=  \displaystyle{\sideset{}{^{\flat }} \sum_{n \ge 1}} ~~ \frac{\lambda_{f}^{r}(n)\sigma_{1; \chi_{8}, \textbf{1}}(n)}{n^{s}}.
\end{split}
\end{equation}
The above Dirichlet series converges for $\Re(s) >2$. We obtain an estimate for $S_{r}(X)$ by using the decomposition of  $R(s)$ in terms of known $L$-functions associated to Hecke eigenform $f \in S_{k}(SL_2(\mathbb{Z}))$. Before acquiring  the decomposition of $R(s)$, we define the $L$-functions associated to a normalised Hecke eigenform $f(\tau) = \displaystyle{\sum_{n=1}^\infty \lambda_f(n)n^{\frac{k-1}{2}}q^{n}} \in S_{k}(SL_2(\mathbb{Z})).$ 
 The Hecke $L$-function  associated to $f$ is given by ($ \Re(s) >1$)
\begin{equation}
\begin{split}
L(s, f) &= \sum_{n \ge 1} \frac{\lambda_{f}(n)}{n^{s}} =  \prod_{p} \left(1-\frac{\lambda_f(p)}{p^s}-\frac{1}{p^{2s}}\right)^{-1}  = \prod_{p} \left(1-\frac{\alpha_p}{p^s}\right)^{-1}\left(1-\frac{\beta_p}{p^{s}}\right)^{-1},
\end{split}
\end{equation}
where $\alpha_p+\beta_p=\lambda_f(p)$ and $\alpha_p\beta_p=1.$  For a given Dirichlet character $\chi$ of modulus $N,$ the twisted Hecke $L$- function  is defined as follows:
\begin{equation}
\begin{split}
L(s, f \times \chi ) &= \sum_{n \ge 1} \frac{\lambda_{f}(n) \chi(n)}{n^{s}}  \qquad \Re(s) >1. 
\end{split}
\end{equation}
The twisted Hecke $L$-function $L(s, f \times \chi )$ is associated to the cusp form $f _{\chi} \in S_{k}(\Gamma_{0}(N^{2}))$ with Fourier coefficients $ \lambda_f(n) \chi(n)$.  Both the $L$-functions satisfy a nice functional equation and it has analytic continuation to whole $\mathbb{C}$-plane \cite[Section 7.2]{Iwaniec}.

\smallskip
For $m \ge 2 ,$ the $m^{th}$ symmetric power $L$-function is defined as
\begin{equation}\label{Symf}
\begin{split}
L(s,sym^{m}f)&: = \prod_{p-{\rm prime}} \prod_{j=0}^{m} \left(1-{\alpha_p}^{m-j}{\beta_p}^{j} {p^{-s}}\right)^{-1} 
                      =\sum_{n=1}^\infty \frac{\lambda_{sym^{m}f}(n)}{n^s},
\end{split}
\end{equation}
where ${\lambda_{sym^{m}f}(n)}$ is multiplicative arithmetical function. At prime values, it is given by 
\begin{equation}\label{SymLf}
\begin{split}
{\lambda_{sym^{m}f}(p)}  =  \lambda_{f}(p^{m}).  
\end{split}
\end{equation}
 From Deligne's bound, we have 
$$
|{\lambda_{sym^{m}f}(n)}| \le d_{m+1}(n) \ll_{\epsilon} n^{\epsilon}
$$
for any real number $\epsilon >0$ and $d_{m}$ denotes the $m$-fold divisor function.

\smallskip
For each $m \ge 2,$ we also define the twisted $m^{th}$ symmetric power $L$-functions given by
\begin{equation}
\begin{split}
L(s, sym^{m}f \times \chi ) &:= \sum_{n \ge 1} \frac{\lambda_{sym^{m}f}(n) \chi(n)}{n^{s}}, 
\end{split}
\end{equation}
similar to the twisted Hecke $L$-function. these $L$-functions are automorphic (for details, see \cite{{Newton}, {NewThorne}}) and  inherit the property similar to the Hecke $L$-function.  For a  holomorphic Hecke eigenform $f$, J. Cogdell and P. Michel \cite{Cog-Mic}  have given the explicit description of analytic continuation and functional equation for the function $L(s,sym^{m}f)$, $ m \ge 3$.  

\smallskip
Let  $\zeta(s)$ and $L(s, \chi) $ (for a Dirichlet character $\chi$ of modulus $N$) denote the Riemann zeta function and Dirichlet $L$-function, respectively defined by 
\begin{equation}\label{RDFun}
\begin{split} 
 \zeta(s) = \displaystyle{\sum_{n \ge 1} n^{-s}} \quad {\rm and} \quad  L(s, \chi) = \displaystyle{\sum_{n \ge 1}\chi(n) n^{-s}}.
 \end{split} 
\end{equation}
We assume the following conventions:
\begin{equation*}
\begin{split}
\begin{cases}
L(s, sym^{0}f) &= \zeta(s), \qquad \qquad L(s, sym^{0}f \times \chi ) = L(s, \chi), \\
L(s, sym^{1}f  ) &= L(s, f), \qquad \quad L(s, sym^{1}f \times \chi )  = L(s, f \times \chi). \\
  \end{cases}
\end{split} 
\end{equation*}

 With these definitions,  we state the decomposition of $R_{r}(s )$, $ r \in \mathbb{N}  $ into well-known $L$-functions.
 \begin{lem}\label{LDecomposition}
Let  $ r \in \mathbb{N}.$ We have the following decomposition for $R_{r}(s)$.
 \begin{equation} \label{DecompositionL}
\begin{split}
 R_{r}(s) & = L_{r}(s) \times U_{r}(s),  \quad \quad {\rm where } \\
 \end{split}
\end{equation}
\begin{equation*}
\begin{split}
 L_{1}(s)  &=  L(s-1, f) L(s, f\times \chi_{8}) \\
 L_{2}(s)  &=   \zeta(s-1) L(s, \chi_{8}) L(s-1, sym^{2}f)  L(s, sym^{2}f \times \chi_{8}).
\end{split}
\end{equation*}
and for each $r \ge 3$,
 \begin{equation*}
\begin{split}
L_{r}(s) & =   \prod_{n=0}^{[r/2]} \left({L(s-1, sym^{r-2n}f)}^{\left({r \choose n}- {r \choose {n-1}}\right)}  L(s, sym^{r-2n}f \times \chi_{8})^{\left({r \choose n}- {r \choose {n-1}}\right)} \right),       \\
\end{split}
\end{equation*}
and ${r \choose n }$ is the binomial coefficient with the convention ${r \choose n } =0$ if $n<0,$ and $\chi_{8}$ is the Dirichlet character modulo $8$ and $U_{r}(s)$ is a Dirichlet series given by 
$$
  U_{r}(s) =  \displaystyle{\prod_{p} \left(1  + \frac{{(  A(p^{2}) - \lambda_{f}(p)}^{2r} \sigma_{1; \chi_{8}, \textbf{1}}^{2}(p))}{p^{2s}} + \cdots \right)}.\\
$$
It converges absolutely and uniformly for $\Re(s)>\frac{3}{2}$ and $U_{r}(s) \neq 0$ for $\Re(s)=2.$
\end{lem}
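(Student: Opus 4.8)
The plan is to establish the Euler product decomposition $R_r(s) = L_r(s) U_r(s)$ by working prime-by-prime, since both $\lambda_f^r(n)\sigma_{1;\chi_8,\mathbf 1}(n)$ and the coefficients of $L_r(s)$ are multiplicative. Fix a prime $p$ and compare the local factors. On the left, the square-free restriction means the local factor of $R_r(s)$ is just $1 + \lambda_f^r(p)\sigma_{1;\chi_8,\mathbf 1}(p)\,p^{-s}$, where $\sigma_{1;\chi_8,\mathbf 1}(p) = p + \chi_8(p)$. The first key step is to expand $\lambda_f(p)^r$ in the basis $\{\lambda_f(p^j)\} = \{\lambda_{sym^j f}(p)\}$ using the $SU(2)$ Clebsch--Gordan / Hecke multiplication rule: one has $\lambda_f(p)^r = \sum_{n=0}^{[r/2]} c_{r,n}\,\lambda_{sym^{r-2n}f}(p)$ with $c_{r,n} = \binom{r}{n} - \binom{r}{n-1}$ (the dimension of the $(r-2n+1)$-dimensional irreducible component in the $r$-fold tensor power of the standard representation). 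This identity is exactly what produces the exponents appearing in $L_r(s)$.

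The second step is to read off, from the symmetric-power $L$-functions and their $\chi_8$-twists, what their combined local factor contributes at $p$ up to order $p^{-s}$. Writing $L(s-1, sym^{m}f)$ and $L(s, sym^m f\times\chi_8)$ as Euler products, the product $\prod_{n}\big(L(s-1,sym^{r-2n}f)L(s,sym^{r-2n}f\times\chi_8)\big)^{c_{r,n}}$ has local factor
\begin{equation*}
1 + \Big(\sum_{n=0}^{[r/2]} c_{r,n}\,\lambda_{sym^{r-2n}f}(p)\big(p + \chi_8(p)\big)\Big)p^{-s} + O(p^{-2s+\epsilon}) = 1 + \lambda_f(p)^r\,\sigma_{1;\chi_8,\mathbf 1}(p)\,p^{-s} + O(p^{-2s+\epsilon}),
\end{equation*}
where the last equality uses the expansion from the first step (note the $p+\chi_8(p)$ factor comes from the shift $s\mapsto s-1$ in the first $L$-function together with the twist by $\chi_8$ in the second). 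For $r=1$ and $r=2$ one checks directly that $L_1(s)$ and $L_2(s)$ as written are precisely the corresponding products (for $r=2$, $\lambda_f(p)^2 = \lambda_{sym^2f}(p) + 1$, matching the factors $\zeta(s-1)L(s,\chi_8)$ and $L(s-1,sym^2f)L(s,sym^2f\times\chi_8)$). Therefore $U_r(s) := R_r(s)/L_r(s)$ has local factor $1 + O(p^{-2s+\epsilon})$, which gives the displayed shape for $U_r(s)$; the explicit leading coefficient $A(p^2) - \lambda_f(p)^{2r}\sigma_{1;\chi_8,\mathbf 1}^2(p)$ records the discrepancy at order $p^{-2s}$ between $R_r$ (which has no $p^{-2s}$ term, by square-freeness) and $L_r$, where $A(p^2)$ denotes the $p^{-2s}$-coefficient of the local factor of $L_r(s)$.

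For the convergence and non-vanishing claims: since every local factor of $U_r(s)$ is $1 + O(p^{-2s+\epsilon})$ with an implied constant uniform in $p$ (all the $\lambda_{sym^m f}(p)$ and divisor-type coefficients being $\ll_\epsilon p^\epsilon$ by Deligne's bound \eqref{lambda-coefficient-bound}), the product $\prod_p(1 + O(p^{-2\sigma+\epsilon}))$ converges absolutely and uniformly on $\Re(s) \ge \sigma$ for any $\sigma > 3/2$, by comparison with $\sum_p p^{-2\sigma+\epsilon}$. On $\Re(s) = 2$ each local factor is $1 + O(p^{-4+\epsilon})$, hence nonzero for $p$ large, and the finitely many small primes contribute nonzero factors as well (they are $1 + (\text{small})$), so $U_r(2)\neq 0$. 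The main obstacle is the bookkeeping in the second step: one must verify carefully that the shift $s\mapsto s-1$ applied to the untwisted symmetric-power factors, combined with the $\chi_8$-twisted factors, reproduces exactly the arithmetic function $\sigma_{1;\chi_8,\mathbf 1}(p) = p + \chi_8(p)$ as the coefficient multiplying $\lambda_f(p)^r$, and that no lower-order symmetric powers are miscounted — this is where the precise form of the binomial differences $\binom{r}{n}-\binom{r}{n-1}$ is essential. The rest is routine multiplicativity and the trivial Euler-product convergence estimate.
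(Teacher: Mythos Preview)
Your proposal is correct and follows essentially the same route as the paper: both arguments reduce to matching local Euler factors, invoke the Chebyshev/Clebsch--Gordan expansion $\lambda_f(p)^r = \sum_{n=0}^{[r/2]}\bigl(\binom{r}{n}-\binom{r}{n-1}\bigr)\lambda_{sym^{r-2n}f}(p)$ (the paper records this as \lemref{ChebpolyLem}), observe that the shift $s\mapsto s-1$ together with the $\chi_8$-twist produces the factor $p+\chi_8(p)=\sigma_{1;\chi_8,\mathbf{1}}(p)$, and then define $U_r(s)$ as the correcting Euler product whose local factors begin at order $p^{-2s}$. Your treatment of the convergence and non-vanishing of $U_r$ is in fact slightly more explicit than the paper's, which simply asserts $B(n)\ll n^{1+\epsilon}$ without spelling out the comparison argument.
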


Before proving \lemref{LDecomposition}, we state the following result which explicitly governs the proof of  \lemref{LDecomposition}.
\begin{lem}\cite[Lemma 2.2]{{Manish-Lalit}}\label{ChebpolyLem}
Let $\ell \in \mathbb{N}.$ For each $j$ with $0 \le j \le \ell$ and  $j \equiv \ell \pmod 2$, let $A_{\ell,j} := {\ell \choose {\frac{\ell-j}{2}}} - {\ell \choose {\frac{\ell-j}{2}-1}}$ and $0$ otherwise and  $T_{m}(2x):= U_{m}(x)$ where $U_{m}(x)$ is the $m^{\rm th}$ Chebyshev polynomial of second kind.
Then 
\begin{equation*}
\begin{split}
x^{\ell} &= \sum_{j=0}^{\ell} A_{\ell,j} T_{\ell-j}(x). \\
\end{split}
\end{equation*}
\end{lem}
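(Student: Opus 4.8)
The identity is classical: it inverts the standard expansion of the Chebyshev polynomials in powers of the variable, and the most transparent proof uses a trigonometric substitution. The plan is to set $x = 2\cos\theta$ (equivalently $x = u + u^{-1}$ with $u = e^{i\theta}$). Since both sides of the asserted identity are polynomials in $x$, it suffices to establish the equality for all $\theta$, i.e.\ at infinitely many values of $x$. Under this substitution the normalisation $T_m(2x) = U_m(x)$ gives
\begin{equation*}
T_m(2\cos\theta) = U_m(\cos\theta) = \frac{\sin((m+1)\theta)}{\sin\theta} = u^m + u^{m-2} + \cdots + u^{-m},
\end{equation*}
so the right-hand side of the claimed formula is an explicit trigonometric (Laurent) polynomial in $u$.

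First I would expand the left-hand side by the binomial theorem,
\begin{equation*}
x^{\ell} = (u + u^{-1})^{\ell} = \sum_{k=0}^{\ell} \binom{\ell}{k} u^{\ell - 2k},
\end{equation*}
and symmetrise by pairing the contributions of $k$ and $\ell - k$, so that $x^{\ell}$ becomes a combination of the quantities $u^{n} + u^{-n} = 2\cos n\theta$ (together with the isolated central term $\binom{\ell}{\ell/2}$ when $\ell$ is even). The key elementary input is the telescoping identity
\begin{equation*}
2\cos m\theta = \frac{\sin((m+1)\theta) - \sin((m-1)\theta)}{\sin\theta} = T_{m}(2\cos\theta) - T_{m-2}(2\cos\theta) \qquad (m \ge 2),
\end{equation*}
supplemented by $2\cos\theta = T_{1}(2\cos\theta)$ and $2 = 2\,T_{0}(2\cos\theta)$. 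Substituting these into the symmetrised sum and collecting the coefficient of each $T_{\ell-j}$, one finds a difference of two consecutive binomial coefficients $\binom{\ell}{a} - \binom{\ell}{a-1}$; matching the index $a$ with $\tfrac{\ell-j}{2}$ identifies this coefficient with $A_{\ell,j}$, which is the assertion.

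An equivalent, more mechanical route is induction on $\ell$: multiplying the identity for $\ell$ by $x$ and using the three-term recurrence $x\,T_{m}(x) = T_{m+1}(x) + T_{m-1}(x)$ (with the convention $T_{-1} \equiv 0$) reduces the inductive step to a two-term recursion expressing $A_{\ell+1,j}$ as a sum of two neighbouring coefficients $A_{\ell,\cdot}$, which is immediate from Pascal's rule applied to the binomials defining $A_{\ell,j}$. The main obstacle in either approach is purely bookkeeping: one must track the boundary and parity conventions carefully — the special case $x\,T_{0} = T_{1}$ versus the generic recurrence, the lone central binomial coefficient appearing when $\ell$ is even, and the vanishing rule $\binom{\ell}{n} = 0$ for $n < 0$ that is built into the definition of $A_{\ell,j}$ — so that, after the telescoping, the coefficient of each $T_{\ell-j}$ collapses to exactly the stated difference of binomial coefficients. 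Nothing here is deep, but the cancellation must be organised carefully for the coefficients to line up precisely as claimed.
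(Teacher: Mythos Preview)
Your argument is correct and is the standard classical proof of this identity: the substitution $x=u+u^{-1}$, the binomial expansion, and the telescoping relation $u^{m}+u^{-m}=T_{m}-T_{m-2}$ do indeed collapse the coefficient of each $T_{m}$ to the required difference of binomial coefficients, and the inductive alternative via the three-term recurrence works just as well.

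There is, however, nothing to compare against: the paper does not prove \lemref{ChebpolyLem} at all. It is quoted verbatim from \cite[Lemma~2.2]{Manish-Lalit} and then simply \emph{applied} (at the prime values $x=\lambda_{f}(p)=2\cos\theta$, using $T_{m}(2\cos\theta)=\lambda_{f}(p^{m})=\lambda_{sym^{m}f}(p)$) to obtain the decomposition \eqref{FCRel}. So your write-up supplies a proof where the paper offers only a citation; any self-contained argument of the kind you outline would be acceptable here.

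One small caution on bookkeeping: the indexing in the lemma as stated is slightly awkward (the parameter $j$ runs over residues $j\equiv\ell\pmod 2$ rather than over the half-integer index $n=(\ell-m)/2$ used in \eqref{FCRel}), so when you ``match the index $a$ with $\tfrac{\ell-j}{2}$'' be sure your conventions actually reproduce the paper's $A_{\ell,j}$ and not its reflection $\binom{\ell}{j/2}-\binom{\ell}{j/2-1}$. This is purely notational and does not affect the validity of your method.
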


 \subsection{Proof of \lemref{LDecomposition}}

From Deligne's estimate, we know that  $\lambda_{f}(p) = 2\cos \theta$, and  $\lambda_{f}(p^{r}) = T_{r} (2 \cos \theta) =U_{r} ( \cos \theta) $. From \lemref{ChebpolyLem}, we get an  expression for ${\lambda_{f}(p) }^{r}$  in terms of the $j^{th}$- symmetric power Fourier coefficient $\lambda_{sym^{j}f}(p)$, i.e.,
\begin{equation}\label{FCRel}
\begin{split}
{\lambda_{f}(p)}^{r}    = \left(\sum_{n=0}^{r/2} \left( {r \choose n} - {r \choose {n-1}} \right) \lambda_{sym_{f}^{r-2n}}(p) \right).\\ 
\end{split}
\end{equation}
We know that  $\lambda_{f}(n)$ and $ \sigma_{1; \chi_{8}, \textbf{1}}(n)$  are multiplicative functions. So,  $ R_{r}(s)$ is given in terms of  an Euler product, i.e.,
 \begin{equation*}
\begin{split}
R_{r}(s) = \displaystyle{\sideset{}{^{\flat }} \sum_{n \ge 1}} ~~ \frac{(\lambda_{f}(n))^{r} \sigma_{1; \chi_{8}, \textbf{1}}(n)}{n^{s}}
 = \displaystyle{\prod_{p} \left(\!1+ \frac{(\lambda_{f}(p))^{r}  \sigma_{1; \chi_{8}, \textbf{1}}(p)}{p^s} \right)} 
 \end{split}
\end{equation*}
for $\Re(s)>2$. From \eqref{FCRel}, we have
\begin{equation*}
\begin{split}
&\lambda_{f}^{r} (p) \sigma_{1; \chi_{8}, \textbf{1}}(p) =  {\lambda_{f}^{r}(p)} (p + \chi_{8}(p))   = \left(\sum_{n=0}^{r/2} \left( {r \choose n} - {r \choose {n-1}} \right) \lambda_{sym_{f}^{r-2n}}(p) \right)(p + \chi_{8}(p)) \\ 
&\qquad \quad = \sum_{n=0}^{r/2} \left( \!{r \choose n} - {r \choose {n-1}} \! \right) p \lambda_{sym_{f}^{r-2n}}(p)  +  \sum_{n=0}^{r/2} \left( \! {r \choose n} - {r \choose {n-1}} \! \right) \lambda_{sym_{f}^{r-2n}}(p)\chi_{8}(p). 
\end{split}
\end{equation*}
For $\Re(s) >2$, we express the function
 $$L_{r}(s) =  \prod_{n=0}^{[r/2]} \left({L(s-1, sym^{r-2n}f)}^{\left({r \choose n}- {r \choose {n-1}}\right)}  L(s, sym^{r-2n}f \times \chi_{8})^{\left({r \choose n}- {r \choose {n-1}}\right)} \right)$$
 as an Euler product of the form  
 \begin{equation*}
\begin{split}
 \displaystyle{\prod_{p} \left(1+ \frac{A(p)}{p^s} + \frac{A(p^{2})}{p^{2s}} + \cdots \right) }, \quad {\rm where } \quad A(p) = -{\lambda_{f}(p)}^{r} \sigma_{1; \chi_{8}, \textbf{1}}(p). 
 \end{split}
\end{equation*}
Moreover, for each prime $p$,  we define the sequence $B(p) = 0$, for each $r \ge 2$, \linebreak  $B(p^{r}) = A(p^{r}) + A(p^{r-1}) \lambda_{f}^{r} (p) \sigma_{1; \chi_{8}, \textbf{1}}(p).$ It is easy to see that $B(n) \ll n^{1+\epsilon}$ for any $\epsilon$. Associated to this sequence, We define the Euler product for $U_{r}(s) $  given by
 \begin{equation*}
\begin{split}
U_{r}(s) =  \displaystyle{\prod_{p} \left(1+ \frac{B(p)}{p^s} + \frac{B(p^{2})}{p^{2s}} + \cdots \right) }.
 \end{split}
\end{equation*}
Then, it is easy to see that  
 \begin{equation*}
\begin{split}
R_{r}(s) =  L_{r}(s)  U_{r}(s).
\end{split}
\end{equation*}
This completes the proof.

\subsection{Convexity bound and integral  moment  of $L$-functions}
\begin{lem}\label{Riemann Zeta }
Let $\zeta(s)$ be the Riemann zeta function. Then for any $\epsilon >0$, we have 
\begin{equation}
\begin{split}
\zeta(\sigma+it) &\ll_{\epsilon} (1+|t|)^{{{\rm max} \left\{ \frac{13}{42}(1-\sigma), 0 \right\} }+\epsilon}
\end{split}
\end{equation}
uniformly for $\frac{1}{2} \le \sigma \le 1$ and $|t| \ge 1$ and 
\begin{equation}
\begin{split}
\int_{0}^{T} \left|\zeta\left(\frac{1}{2}+it \right)\right|^{2} dt &\ll_{\epsilon} T^{1+\epsilon} \\
\end{split}
\end{equation} 
uniformly for $T \ge 1.$ For sub-convexity bound and integral estimate  of $\zeta(s)$, we refer to \cite[Theorem  5]{Bourgain} and \cite[Theorem 8.4]{Ivic} respectively.
\end{lem}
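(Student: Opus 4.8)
The statement to prove is \lemref{Riemann Zeta }, which packages together two standard facts about the Riemann zeta function: a sub-convexity bound on the critical strip and the mean-square estimate on the critical line. Since the lemma itself explicitly cites \cite[Theorem 5]{Bourgain} and \cite[Theorem 8.4]{Ivic} as the sources, the ``proof'' is really a derivation of the stated uniform bounds from these cited results, together with the Phragm\'en--Lindel\"of principle.

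\medskip

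\emph{Plan.} First I would recall the two endpoint inputs. On the line $\sigma = 1$ we have the classical bound $\zeta(1+it) \ll_\epsilon (1+|t|)^\epsilon$ (indeed better, but $\epsilon$ suffices), and in fact $\zeta(s)$ is bounded on any line $\sigma = 1+\delta$. On the critical line $\sigma = \tfrac12$, Bourgain's sub-convexity result gives $\zeta(\tfrac12 + it) \ll_\epsilon (1+|t|)^{13/84 + \epsilon}$. Next I would invoke the Phragm\'en--Lindel\"of convexity principle for the function $\zeta(s)$ in the strip $\tfrac12 \le \sigma \le 1$: since $\zeta(s) - \tfrac{1}{s-1}$ is entire and of finite order, and $\zeta(s)$ has polynomial growth on the two bounding vertical lines with exponents $\mu(\tfrac12) \le \tfrac{13}{84}$ and $\mu(1) = 0$, the convexity theorem yields, for $\tfrac12 \le \sigma \le 1$,
\begin{equation*}
\zeta(\sigma + it) \ll_\epsilon (1+|t|)^{\frac{13}{84}\cdot 2(1-\sigma) + \epsilon} = (1+|t|)^{\frac{13}{42}(1-\sigma)+\epsilon},
\end{equation*}
where the factor $2(1-\sigma)$ is the linear interpolation coefficient (equal to $1$ at $\sigma=\tfrac12$ and $0$ at $\sigma=1$). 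Taking the maximum with $0$ just records that for $\sigma \ge 1$ the exponent is already $0$; the pole at $s=1$ is harmless for $|t|\ge 1$. This gives the first displayed inequality.

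\medskip

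For the second displayed inequality, the mean-square bound $\int_0^T |\zeta(\tfrac12+it)|^2\,dt \ll_\epsilon T^{1+\epsilon}$, I would simply cite the classical asymptotic formula for the second moment on the critical line, namely $\int_0^T |\zeta(\tfrac12+it)|^2\,dt = T\log T + (2\gamma - 1 - \log 2\pi)T + O(T^{1/2+\epsilon})$ (Ingham's theorem, with error terms due to Atkinson, Heath-Brown, and others, as found in \cite[Theorem 8.4]{Ivic} and the surrounding discussion). This is far stronger than what is claimed; the weak form $\ll_\epsilon T^{1+\epsilon}$ follows at once and holds uniformly for $T \ge 1$ since the main term is $\ll T\log T \ll_\epsilon T^{1+\epsilon}$.

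\medskip

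\emph{Main obstacle.} There is essentially no obstacle here: both halves of the lemma are direct quotations of results in the literature, and the only genuine step is the convexity interpolation, which is routine once one verifies the hypotheses of Phragm\'en--Lindel\"of (finite order and the removal of the simple pole via $(s-1)\zeta(s)$). The one point requiring a modicum of care is bookkeeping the constant: one must check that Bourgain's exponent $\tfrac{13}{84}$ on the half-line interpolates to exactly $\tfrac{13}{42}(1-\sigma)$, i.e. that $\tfrac{13}{84} \cdot 2 = \tfrac{13}{42}$, and that the $\max\{\cdot,0\}$ and the restriction $|t|\ge 1$ correctly absorb the contribution of the pole and the region $\sigma > 1$. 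Everything else is standard and can be stated in a couple of lines with the appropriate citations.
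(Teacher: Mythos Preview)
Your proposal is correct and matches the paper's approach: the paper does not give a separate proof of this lemma at all, but simply embeds the citations to \cite[Theorem~5]{Bourgain} and \cite[Theorem~8.4]{Ivic} in the statement itself, leaving the Phragm\'en--Lindel\"of interpolation implicit. You have correctly identified and filled in that standard interpolation step (including the arithmetic $\tfrac{13}{84}\cdot 2=\tfrac{13}{42}$), so your write-up is, if anything, slightly more detailed than what the paper provides.
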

 
 \begin{lem} \cite[eq. (1.1)]{HBrown}\label{Dirichlet L function }
Let $L(s, \chi)$  be the Dirichlet $L$-function for a  Dirichlet character $\chi$ modulo N	. Then for any $\epsilon >0$, we have 
\begin{equation}
\begin{split}
L(\sigma+it, \chi) &\ll_{\epsilon, N} (1+|t|)^{\frac{1}{3}(1-\sigma)+\epsilon}
\end{split}
\end{equation}
uniformly for $\frac{1}{2} \le \sigma \le 1$ and $|t| \ge 1$.
\end{lem}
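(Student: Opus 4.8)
\emph{Proof proposal.} Since this estimate is quoted from \cite{HBrown}, I only sketch the route I would take. The bound is a Weyl-type $t$-aspect subconvexity estimate for $L(s,\chi)$, and the plan is to prove it on the two edges $\sigma=\tfrac12$ and $\sigma=1$ of the strip and then interpolate by convexity. On the line $\sigma=1$ one has the elementary bound $L(1+it,\chi)\ll_{N}\log(2+|t|)\ll_\epsilon(1+|t|)^\epsilon$, obtained by truncating the Dirichlet series at height $\asymp|t|$ and controlling the tail by partial summation (using $\sum_{n\le u}\chi(n)\ll_N 1$ for non‑principal $\chi$). If $\chi$ is principal then $L(s,\chi)=\zeta(s)\prod_{p\mid N}(1-p^{-s})$ and the lemma follows at once from the subconvexity bound for $\zeta$ recorded above; so assume $\chi$ non‑principal, whence $L(s,\chi)$ is entire and of finite order in the strip (by the trivial convexity bound from the functional equation). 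Granting the critical‑line estimate
\begin{equation*}
L\!\left(\tfrac12+it,\chi\right)\ll_{N,\epsilon}(1+|t|)^{1/6+\epsilon},
\end{equation*}
the Phragm\'en--Lindel\"of principle interpolates linearly in $\sigma$ between this and the $\sigma=1$ bound; the linear function through $(\tfrac12,\tfrac16)$ and $(1,0)$ is $\sigma\mapsto\tfrac13(1-\sigma)$, exactly the exponent asserted.

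To obtain the critical‑line bound I would begin from an approximate functional equation for $L(\tfrac12+it,\chi)$, which represents it up to a negligible error as $\sum_{n\le M}\chi(n)n^{-1/2-it}$ plus a dual sum of the same shape, with $M\asymp\sqrt{N|t|}$. A dyadic decomposition together with partial summation to remove the factor $n^{-1/2}$ reduces everything to bounding
\begin{equation*}
S(P):=\sum_{P<n\le 2P}\chi(n)\,n^{-it}
\end{equation*}
(and its subsums over subintervals of $(P,2P]$) by $P^{-1/2}|S(P)|\ll_{N,\epsilon}(1+|t|)^{1/6+\epsilon}$ uniformly for $1\le P\le M$. For $P\le|t|^{1/3}$ the trivial bound $|S(P)|\le P$ already gives $P^{-1/2}|S(P)|\le P^{1/2}\le|t|^{1/6}$, so one may assume $P>|t|^{1/3}$.

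In that range I would exploit the periodicity of $\chi$: writing $n=Nm+a$ with $a$ over a reduced residue system modulo $N$, $S(P)$ splits into $\le N$ sums $\chi(a)\sum_m e^{-it\log(Nm+a)}$ whose phase $\psi_a(m)=-\tfrac{t}{2\pi}\log(Nm+a)$ satisfies $\psi_a^{(j)}(m)\asymp_{j,N}|t|/P^{j}$ uniformly for $Nm+a\asymp P$. Van der Corput's third‑derivative test (equivalently, the exponent pair $(\tfrac16,\tfrac23)$) bounds each such sum by $\ll\tfrac{P}{N}(|t|/P^{3})^{1/6}+(P/N)^{1/2}(|t|/P^{3})^{-1/6}$, and summing over the classes gives
\begin{equation*}
|S(P)|\ll_N|t|^{1/6}P^{1/2}+P\,|t|^{-1/6}.
\end{equation*}
Hence $P^{-1/2}|S(P)|\ll_N|t|^{1/6}+P^{1/2}|t|^{-1/6}\ll_N|t|^{1/6}$, the last step because $P\le M\ll_N|t|^{1/2}$ forces $P^{1/2}|t|^{-1/6}\ll_N|t|^{1/12}$. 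Summing over the $O(\log|t|)$ dyadic blocks and feeding this into the approximate functional equation proves the critical‑line estimate, and the first paragraph then completes the lemma.

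The main obstacle is the uniform van der Corput estimate for the $\chi$‑twisted logarithmic exponential sum: one must verify that decomposing into residue classes modulo $N$ preserves the expected sizes of the phase derivatives throughout the whole range $|t|^{1/3}<P\le M$, and that the second term $(|t|/P^{3})^{-1/6}$ in the third‑derivative test is genuinely absorbed after dividing by $P^{1/2}$ --- which it is, precisely because the approximate functional equation caps the summation length at $M\asymp\sqrt{N|t|}$. The remaining ingredients (the approximate functional equation, the Phragm\'en--Lindel\"of interpolation, the $\sigma=1$ edge bound, and the principal‑character reduction) are all standard.
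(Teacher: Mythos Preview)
The paper gives no proof of this lemma at all---it simply records the bound and cites Heath-Brown \cite[eq.\ (1.1)]{HBrown}. Your sketch is a correct and self-contained outline of the classical Weyl--van der Corput route to this $t$-aspect subconvexity: reduce the principal-character case to $\zeta$, handle the edge $\sigma=1$ elementarily, prove the critical-line exponent $1/6$ via the approximate functional equation together with the third-derivative test (equivalently the exponent pair $(\tfrac16,\tfrac23)$) applied after splitting the sum into residue classes modulo $N$, and then interpolate by Phragm\'en--Lindel\"of. Your treatment of the secondary term $P^{1/2}|t|^{-1/6}$ is also correct, since the approximate functional equation caps $P$ at $\asymp_N|t|^{1/2}$, making that term $\ll_N|t|^{1/12}$. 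In short, you have supplied precisely the standard argument that the paper chose to outsource to the literature; there is no divergence in approach to discuss.
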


\begin{lem}\label{Modular L function }
For any $\epsilon >0$, the sub-convexity bound  of $L(s, f)$ is given by
\begin{equation}
\begin{split}
L(\sigma+it, f) &\ll_{f,\epsilon} (1+|t|)^{{{\rm max} \left\{\frac{2}{3}(1-\sigma), 0\right\} }+\epsilon}
\end{split}
\end{equation}
uniformly for $\frac{1}{2} \le \sigma \le 1$ and $|t| \ge 1,$ and the second  integral moment of $L(s, f)$ is given by
\begin{equation}\label{Secondmoment}
\begin{split}
\int_{0}^{T} \left| L\left(\frac{1}{2} + \epsilon+it , f \right)\right|^{2} dt &\ll_{f,\epsilon} T^{1+\epsilon} \\
\end{split}
\end{equation} 
uniformly for $T \ge 1.$ The results also hold for $f\ \times \chi$ in place of $f$ with a different  the absolute constant depends on $f$ and $\epsilon$.
\end{lem}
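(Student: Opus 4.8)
The plan is to deduce this lemma from Good's subconvexity theorem together with standard mean-value technology for $GL(2)$ $L$-functions, using only that $L(s,f)$ is entire (since $f$ is cuspidal) and of finite order in every vertical strip. For the subconvex bound I would interpolate between the two edges of the strip $\frac12\le\sigma\le1$. On the line $\Re(s)=1+\delta$ (any fixed $\delta>0$) the Dirichlet series $\sum_n\lambda_f(n)n^{-s}$ converges absolutely, so $L(s,f)\ll_\delta1$ there; on the critical line Good's estimate gives $L(\tfrac12+it,f)\ll_{f,\epsilon}(1+|t|)^{1/3+\epsilon}$. Since $L(s,f)$ grows at most polynomially in $|t|$ throughout $-\delta\le\sigma\le1+\delta$ --- this follows from the functional equation for the completed $L$-function $\Lambda(s,f)$ and Stirling's formula --- the Phragm\'en--Lindel\"of convexity principle, applied to $L(s,f)(1+|s|)^{-A}$ on the strip $\frac12\le\sigma\le1+\delta$, bounds $L(\sigma+it,f)$ by $(1+|t|)^{e(\sigma)+\epsilon}$ where $e(\sigma)$ interpolates linearly between $e(\tfrac12)=\tfrac13$ and $e(1+\delta)=0$. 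Letting $\delta\to0$ yields $e(\sigma)=\frac23(1-\sigma)$ uniformly on $\frac12\le\sigma\le1$, which is the first display.

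For the second moment I would work at $s=\frac12+\epsilon+it$, slightly to the right of the centre of the critical strip. Using an approximate functional equation, $L(s,f)$ equals a Dirichlet polynomial $\sum_{n\le y}\lambda_f(n)n^{-s}$ plus a dual term $X(s)\sum_{n\le y'}\lambda_f(n)n^{s-1}$ up to a negligible tail, where $y,y'\ll(1+|t|)^{1+\epsilon}$ and $|X(s)|\ll(1+|t|)^{-2\epsilon}\le1$ by Stirling. Squaring, integrating over $0\le t\le T$, and invoking the Montgomery--Vaughan mean value theorem $\int_0^T\bigl|\sum_{n\le N}a_nn^{-it}\bigr|^2dt=\sum_{n\le N}|a_n|^2(T+O(n))$ with $a_n=\lambda_f(n)n^{-1/2-\epsilon}$ and $N\ll T^{1+\epsilon}$, the diagonal contributes $T\sum_{n\ge1}|\lambda_f(n)|^2n^{-1-2\epsilon}\ll_{f,\epsilon}T$ (the series converges by Deligne's bound $|\lambda_f(n)|\le d(n)$, being of size $\zeta(1+2\epsilon)^4/\zeta(2+4\epsilon)$), while the off-diagonal $\sum_{n\le N}|\lambda_f(n)|^2n^{-2\epsilon}\ll N^{1-2\epsilon}(\log N)^3$ is $o(T)$; the dual term is estimated identically and is smaller still. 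This gives $\int_0^T|L(\tfrac12+\epsilon+it,f)|^2dt\ll_{f,\epsilon}T^{1+\epsilon}$ (indeed $\ll_{f,\epsilon}T$). Alternatively one may simply quote Good's sharper asymptotic $\int_0^T|L(\tfrac12+it,f)|^2dt\asymp_fT\log T$, of which the stated inequality is a weakening.

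Finally, for the twist: as recalled before the statement, $L(s,f\times\chi)$ is the Hecke $L$-function of the cusp form $f_\chi\in S_k(\Gamma_0(N^2))$, whose coefficients $\lambda_f(n)\chi(n)$ still obey $|\lambda_f(n)\chi(n)|\le d(n)$ and whose completed $L$-function satisfies a functional equation of the same shape with conductor a power of $N$. Good's subconvex bound and the second-moment estimate are valid for holomorphic cusp forms of arbitrary level, so both displays carry over verbatim to $f\times\chi$, the implied constants now depending additionally on $N$ (equivalently on $\chi$); as $N$ is fixed this is exactly the asserted conclusion. The only steps demanding care are the Phragm\'en--Lindel\"of interpolation --- where one must verify the polynomial-growth hypothesis uniformly across the full strip, which the functional equation supplies --- and checking that the tail and dual-term contributions in the approximate functional equation are of genuinely lower order; both are routine, so I anticipate no real obstacle, the lemma being essentially a repackaging of Good's theorems together with the Montgomery--Vaughan mean value theorem.
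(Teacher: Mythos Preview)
Your proposal is correct and matches the paper's approach: the paper likewise deduces the subconvex bound from Good's critical-line estimate via Phragm\'en--Lindel\"of, and for the second moment simply cites Ivi\'c's mean-value theorem (your approximate-functional-equation/Montgomery--Vaughan sketch, or your alternative of quoting Good's asymptotic, recovers the same standard $GL(2)$ input). The only difference is that you supply more detail where the paper gives a bare citation.
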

\begin{proof}
The sub-convexity bound of Hecke $L$-function $L(s, f)$ follows from the standard argument of Phragmen - Lindel\"{o}f convexity principle and a result of A. Good  \cite[Corollary]{AGood}. For the integral estimate, we refer to \cite[Theorem 2]{AIvic}.  
\end{proof}

 \begin{lem}\cite[Corollary 2.1]{Nunes}
 For any arbitrarily small $\epsilon >0$, we have 
\begin{equation}
\begin{split}
L(\sigma+it, sym^{2}f)  &\ll_{f, \epsilon} (1+|t|)^{{{\rm max} \left\{\frac{5}{4}(1-\sigma), 0\right\} }+\epsilon}
\end{split}
\end{equation}
uniformly for $\frac{1}{2} \le \sigma \le 1$ and $|t| \ge 1$.  A similar result also holds for  $sym^{2}f \otimes \chi$.
\end{lem}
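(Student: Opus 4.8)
This statement is quoted verbatim as \cite[Corollary 2.1]{Nunes}, so nothing genuinely needs to be reproved here; I outline instead how one recovers it from a single subconvex bound on the critical line, by the same Phragmén--Lindel\"of scheme already invoked above for $L(s,f)$. Since $f\in S_{k}(SL_2(\mathbb{Z}))$ is fixed and of level one (hence non-dihedral), $sym^{2}f$ is a self-dual cuspidal automorphic representation of $GL_{3}$ with trivial central character, so $L(s,sym^{2}f)$ is entire, of order one, and satisfies a functional equation relating $s$ to $1-s$ with archimedean conductor $\asymp(1+|t|)^{3}$. On the line $\sigma=1+\epsilon$ the Euler product \eqref{Symf} converges absolutely, giving $L(1+\epsilon+it,sym^{2}f)\ll_{f,\epsilon}1$, and the functional equation together with Stirling's formula transfers this to a polynomial bound on $\sigma=-\epsilon$. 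Thus the whole problem is localised to the critical line.

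The convexity estimate is then immediate: interpolating the two edge bounds by Phragmén--Lindel\"of (applicable since $L(s,sym^{2}f)$ is entire and of finite order in the strip) yields $L(1/2+it,sym^{2}f)\ll_{f,\epsilon}(1+|t|)^{3/4+\epsilon}$. The content of the lemma is to break this barrier, i.e.\ to prove the $t$-aspect subconvex bound
\[
L\!\left(\tfrac12+it,\,sym^{2}f\right)\ll_{f,\epsilon}(1+|t|)^{5/8+\epsilon}.
\]
Granting this, a second application of Phragmén--Lindel\"of between $\sigma=1/2$ (exponent $5/8$) and $\sigma=1+\epsilon$ (exponent $0$) gives exactly the linear exponent $\tfrac{5}{4}(1-\sigma)$ throughout $\tfrac12\le\sigma\le1$, up to $\epsilon$, which is the assertion.

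The crux, and the step I expect to be the main obstacle, is therefore the critical-line bound displayed above. I would approach it through a smoothed approximate functional equation, which expresses $L(1/2+it,sym^{2}f)$ as a sum $\sum_{n}\lambda_{sym^{2}f}(n)\,n^{-1/2-it}$ of effective length $N\asymp(1+|t|)^{3/2}$, and then seek cancellation past the square-root-of-conductor range. The standard mechanism --- realised in \cite{Nunes}, in the line of work initiated by Xiaoqing Li on $t$-aspect subconvexity for $GL_{3}$ --- is to dualise the $n$-sum, either via the $GL_{3}$ Voronoi summation formula of Miller--Schmid or via the Duke--Friedlander--Iwaniec $\delta$-method, so as to replace it by a shorter dual sum; the emerging oscillatory integrals are then evaluated by stationary phase, the remaining arithmetic sums are bounded by large-sieve or van der Corput/exponent-pair inequalities, and balancing the two sides produces the saving of $(1+|t|)^{1/8}$ over convexity. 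I would simply cite this analysis rather than reproduce the stationary-phase bookkeeping. Finally, the twisted variant $L(s,sym^{2}f\otimes\chi)$ with $\chi$ the fixed character of modulus $8$ is handled identically: the fixed twist alters only the bounded part of the conductor and inserts harmless Gauss-sum factors into the Voronoi step, so the same exponent persists.
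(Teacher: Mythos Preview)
Your proposal is correct and matches the paper's treatment: the lemma is stated with a direct citation to \cite[Corollary~2.1]{Nunes} and no proof is given in the paper itself. Your additional Phragm\'en--Lindel\"of reduction to the critical-line bound $L(1/2+it,sym^{2}f)\ll(1+|t|)^{5/8+\epsilon}$ and the sketch of how that bound is obtained are accurate expository context, but go beyond what the paper records.
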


 \begin{lem}\label{General L fun-Con } \cite[pp. 100]{HIwaniec}
Let $L(s,F)$ be an $L$-function of degree $m \ge 2,$ i.e.,
\begin{equation}
\begin{split}
L(s, F) & = \sum_{n \ge 1} \frac{\lambda_{F}(n)}{n^{s}} = \prod_{p-{\rm prime}} \prod_{j= 1}^{m} \left(1-\frac{\alpha_{p, f, j}}{p^s}\right)^{-1},
\end{split}
\end{equation}
where $\alpha_{p, f, j}$, $1 \le j \le m$; are the local parameters of $L(s, F)$ at prime $p$ and $\lambda_{F}(n) = O(n^{\epsilon})$ for any $\epsilon>0.$ We assume that the series and Euler product converge absolutely for $\Re(s) > 1$ and $L(s, F)$ is an entire function except possibly for a pole at $s = 1$ of order $r$ and satisfies a nice functional equation $(s \rightarrow 1-s)$. Then for any $\epsilon >0$ and $s =\sigma+it$, we have 
\begin{equation}
\begin{split}
\left( \frac{s-1}{s+1}\right)^{r}L(\sigma+it, F) &\ll_{F,\epsilon} (1+|t|)^{\frac{m}{2}(1-\sigma)+\epsilon}
\end{split}
\end{equation}
uniformly for $0 \le \sigma \le 1$.  and $|t| \ge 1$. 
\end{lem}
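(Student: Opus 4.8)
The plan is to prove this as a textbook application of the Phragm\'en--Lindel\"of convexity principle, following Iwaniec--Kowalski. The rational prefactor $\left(\frac{s-1}{s+1}\right)^{r}$ is inserted precisely to cancel the pole of order $r$ at $s=1$, so that
\[
G(s) := \left(\frac{s-1}{s+1}\right)^{r} L(s,F)
\]
is entire; moreover $(s+1)^{-r}$ decays in $t$ while $(s-1)^{r}$ grows only polynomially in $t$, so the prefactor affects none of the exponents below once $\epsilon$ is allowed to absorb it. First I would fix a small $\delta>0$ and work in the closed strip $-\delta \le \sigma \le 1+\delta$, aiming to bound $G$ on the two vertical edges and then interpolate.

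On the right edge $\sigma = 1+\delta$ the Dirichlet series converges absolutely: since $\lambda_{F}(n)\ll n^{\epsilon}$, the sum $\sum_{n}|\lambda_{F}(n)|\,n^{-1-\delta}$ converges, so $L(1+\delta+it,F)\ll 1$, and the rational prefactor is $O(1)$ there, giving $G(1+\delta+it)\ll 1$. On the left edge $\sigma=-\delta$ I would use the assumed functional equation, written in the form
\[
L(s,F) = \varepsilon\,\frac{\gamma(1-s)}{\gamma(s)}\,L(1-s,\widetilde F), \qquad \gamma(s) = Q^{s}\prod_{j=1}^{m}\Gamma\!\left(\tfrac{s+\mu_{j}}{2}\right),
\]
where $\widetilde F$ is the dual, $|\varepsilon|=1$, and the $\mu_{j}$ are the archimedean parameters. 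When $\sigma=-\delta$ we have $\Re(1-s)=1+\delta$, so $L(1-s,\widetilde F)\ll 1$ by absolute convergence again. The heart of the edge estimate is Stirling's formula applied to the $m$ gamma factors: for $|t|\ge 1$ one finds
\[
\left|\frac{\gamma(1-s)}{\gamma(s)}\right| \asymp Q^{1-2\sigma}\prod_{j=1}^{m}\left|\tfrac{t}{2}\right|^{\frac{1-2\sigma}{2}} \asymp (1+|t|)^{\frac{m}{2}(1-2\sigma)},
\]
the real parts $\Re\mu_{j}$ cancelling in the ratio to leading order and the exponential factors $e^{-m\pi|t|/4}$ cancelling exactly. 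At $\sigma=-\delta$ this gives $(1+|t|)^{\frac{m}{2}(1+2\delta)}$, so altogether $G(-\delta+it)\ll(1+|t|)^{\frac{m}{2}+m\delta}$.

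Finally I would apply Phragm\'en--Lindel\"of to $G$ in the strip $-\delta\le\sigma\le 1+\delta$ with edge exponents $0$ and $\tfrac{m}{2}+m\delta$; the linear interpolant is $\bigl(\tfrac{m}{2}+m\delta\bigr)\tfrac{1+\delta-\sigma}{1+2\delta}$, which for $0\le\sigma\le 1$ equals $\tfrac{m}{2}(1-\sigma)+O(\delta)$. Since $\delta$ is arbitrary and dividing back by the rational prefactor changes the bound only by $O(|t|^{\epsilon})$, this yields $\left(\frac{s-1}{s+1}\right)^{r}L(\sigma+it,F)\ll_{F,\epsilon}(1+|t|)^{\frac{m}{2}(1-\sigma)+\epsilon}$ uniformly on $0\le\sigma\le 1$, $|t|\ge 1$. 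The one genuine technical point, and the main obstacle, is that Phragm\'en--Lindel\"of requires $G$ to be of finite order in the strip; I would establish this beforehand by a crude bound, namely estimating $L(1-s,\widetilde F)$ trivially on $\Re(1-s)=1+\delta$ and feeding it through the functional equation together with the polynomial-in-$t$ growth of $\gamma(1-s)/\gamma(s)$, thereby seeing that $G$ grows at most polynomially throughout the strip, which suffices to license the convexity principle.
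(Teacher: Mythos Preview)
Your argument is correct and is precisely the standard convexity argument via Phragm\'en--Lindel\"of as presented in the cited reference. Note that the paper does not actually give its own proof of this lemma; it is simply quoted from Iwaniec--Kowalski \cite[pp.~100]{HIwaniec}, so there is nothing to compare against beyond observing that you have reproduced the textbook argument the citation points to.
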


\begin{lem}\label{Mean-value} \cite[Lemma 2.6]{G. Lu1}
Let $L(s, F)$ be an $L$-function of degree $m \ge 2$. Then for any $\epsilon >0$ and  $T\ge 1$, We have 
\begin{equation}
\begin{split}
\int_{T}^{2T} \left|L \left(\frac{1}{2} + \epsilon+it, F\right)\right|^{2} dt &\ll_{F,\epsilon} T^{\frac{m}{2}+\epsilon}.
\end{split}
\end{equation}
\end{lem}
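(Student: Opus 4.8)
The plan is to reduce the second moment to a mean value of short Dirichlet polynomials via an approximate functional equation, and then to invoke the Montgomery--Vaughan mean value theorem. Write $s=\tfrac12+\epsilon+it$ with $t\in[T,2T]$. Since $L(s,F)$ has degree $m$, a functional equation $s\mapsto 1-s$, at most a pole at $s=1$, and coefficients $\lambda_F(n)\ll_\epsilon n^\epsilon$, its analytic conductor at $s$ satisfies $\mathfrak q(s)\asymp(1+|t|)^m$; and since $\Re s=\tfrac12+\epsilon<1$ the point lies strictly to the left of the possible pole. First I would invoke a smoothed approximate functional equation (as in \cite[Theorem~5.3]{HIwaniec}) to obtain, uniformly for $t\in[T,2T]$,
\[
L(s,F)=\sum_{n\le N}\frac{\lambda_F(n)}{n^{\,1/2+\epsilon+it}}\,W_1\!\Bigl(\tfrac nN\Bigr)+\mathcal X(s)\sum_{n\le N}\frac{\overline{\lambda_F(n)}}{n^{\,1/2-\epsilon-it}}\,W_2\!\Bigl(\tfrac nN\Bigr)+O_{F,\epsilon}\bigl(T^{-1}\bigr),
\]
where $N\asymp T^{\,m/2+\epsilon}$, the $W_i$ are smooth cutoffs with $|W_i|\ll 1$, and $|\mathcal X(s)|\asymp\mathfrak q(s)^{1/2-\sigma}=\mathfrak q(s)^{-\epsilon}\ll1$ on this line; the $\epsilon$-shift off the central line only helps, making the dual archimedean weight small and the tails rapidly decaying.

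Next I would use $|a+b+c|^2\ll|a|^2+|b|^2+|c|^2$ and integrate over $[T,2T]$, reducing the problem to bounding $\int_T^{2T}\bigl|\sum_{n\le N}a_n n^{\pm it}\bigr|^2\,dt$ with $a_n=\lambda_F(n)\,n^{-1/2-\epsilon}W_1(n/N)$ and, respectively, $a_n=\overline{\lambda_F(n)}\,n^{-1/2+\epsilon}W_2(n/N)$ (the error term contributes $O(T^{-1})$ and the factor $|\mathcal X(s)|^2\ll1$ is harmless). For each I would apply the mean value theorem for Dirichlet polynomials,
\[
\int_T^{2T}\Bigl|\sum_{n\le N}a_n n^{\pm it}\Bigr|^2\,dt\ \ll\ \sum_{n\le N}|a_n|^2(T+n)\ \ll\ T\sum_{n\le N}|a_n|^2+\sum_{n\le N}n\,|a_n|^2 .
\]
Since $\lambda_F(n)\ll_\epsilon n^\epsilon$, one has $\sum_{n\le N}|a_n|^2\ll N^\epsilon$ and $\sum_{n\le N}n|a_n|^2\ll N^{1+\epsilon}$, so each integral is $\ll T^{1+\epsilon}+N^{1+\epsilon}\ll T^{1+\epsilon}+T^{\,m/2+\epsilon}$. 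As $m\ge2$ this is $\ll_{F,\epsilon}T^{\,m/2+\epsilon}$, which is the assertion.

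The genuinely delicate step is the first one: extracting an approximate functional equation of the correct length $\asymp\mathfrak q(s)^{1/2}=(1+|t|)^{m/2}$, uniformly for $t$ in a dyadic block, with negligible tail and with the right size for the archimedean ratio $\mathcal X(s)$ along $\Re s=\tfrac12+\epsilon$; once that is in place the rest is routine. One caveat to keep in mind: the dual sum must be treated in mean square rather than pointwise --- a pointwise estimate for $\sum_{n\le N}\overline{\lambda_F(n)}n^{-1/2+\epsilon+it}$ would cost a factor $T$ (indeed, the pointwise convexity bound only gives $\int_T^{2T}|L|^2\ll T^{\,m/2+1+\epsilon}$), whereas the Montgomery--Vaughan bound keeps the dual contribution at the same order $T^{\,m/2+\epsilon}$. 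All implied constants depend only on $F$ and $\epsilon$, uniformly for $T\ge1$.
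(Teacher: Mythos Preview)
The paper does not supply its own proof of this lemma; it is simply quoted as \cite[Lemma~2.6]{G. Lu1}, so there is nothing in the paper to compare your argument against. That said, your sketch is correct and is exactly the standard route to such mean-square bounds: an approximate functional equation producing two Dirichlet polynomials of length $\asymp\mathfrak q(s)^{1/2}\asymp T^{m/2}$, followed by the Montgomery--Vaughan mean value theorem, giving $T\sum|a_n|^2+\sum n|a_n|^2\ll T^{1+\epsilon}+T^{m/2+\epsilon}\ll T^{m/2+\epsilon}$ since $m\ge2$. Your remark that a pointwise treatment of the dual sum loses a factor of $T$ (recovering only convexity) is also on point.

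One detail worth a sentence in a full write-up: in the approximate functional equation of \cite[Theorem~5.3]{HIwaniec} the weight functions $V_s$ depend on $s$, so the coefficients of your Dirichlet polynomial are not literally constant in $t$, and Montgomery--Vaughan does not apply verbatim. This is routinely handled either by fixing the truncation parameter $X\asymp T^{m/2}$ uniformly for $t\in[T,2T]$ and checking that the $t$-variation of $V_s$ on a dyadic block is harmless, or by opening $V_s$ via its Mellin integral representation before integrating in $t$. With that caveat your argument is complete.
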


\section{Proof of results}
\subsection{\textbf{General philosophy:}}
Let $1 \le Y < \frac{X}{2}$. In order to obtain an upper bound for the sum $S_{r}(X )$ given in \eqref{Sum-PIBQF}, we introduce a smooth compactly supported function $w(x)$ satisfying; $w(x) =1$ for $x \in [2Y, X],$ $w(x) = 0$ for $x<Y$ and $x> X+Y,$ and $w^{(r)}(x) \ll_{r} Y^{-r}$ for all $r\ge 0.$ In general, for any arithmetical function $f(n), $ we have 
\begin{equation}\label{UpperEstimate}
\sum_{n \le X} f(n) = \sum_{n = 1}^{\infty} f(n)w(n) +  O\left(\sum_{n< 2Y} |f(n)| \right) + O\left(\sum_{X< n< X+ Y} |f(n)| \right). 
\end{equation}
Moreover, by Mellin's inverse transform, we have 
\begin{equation}
 \sum_{n = 1}^{\infty} f(n)w(n) = \frac{1}{2 \pi i} \int_{(b)} \tilde w(s) \left(\sum_{n\ge 1} \frac{f(n)}{n^{s}}\right)  ds, 
 \end{equation}
where $b$ is a real number larger than the  abscissa of absolute convergence of $\displaystyle{\sum_{n\ge 1} {f(n)}{n^{-s}}}$ and the Mellin's transform $\tilde w(s)$ is given by following integral:
$
\tilde w(s) = \int_{0}^{\infty} w(x) x^{s} \frac{dx}{x}.
$

We observe that (due to integration by parts), 
\begin{equation}\label{FourierW}
\tilde w(s) =  \frac{1}{s(s+1)\cdots(s+m-1)}\int_{0}^{\infty} w^{(m)}(x) x^{s+m-1} dx \ll \frac{Y}{X^{1-\sigma}}  \left(\frac{X}{|s|Y}\right)^{m},
\end{equation}
for any $m\ge 0,$ where $\sigma = \Re(s).$ For details, we refer to \cite[Section 3]{YJGL}.

\smallskip 
From Equation \eqref{UpperEstimate} with $f(n) =  \lambda_{f}^{r}(n)  \sigma_{1; \chi_{8}, \textbf{1}}(n),$ we have 
\begin{equation*}
\begin{split}
\sideset{}{^{\flat }} \sum_{n \le X}  \lambda_{f}^{r}(n)  \sigma_{1; \chi_{8}, \textbf{1}}(n) & = \sideset{}{^{\flat }} \sum_{n \ge 1} \lambda_{f}^{r}(n)  \sigma_{1; \chi_{8}, \textbf{1}}(n) w(n)  \\
& \quad  +  O\left(\sideset{}{^{\flat }} \sum_{n < 2Y}  |\lambda_{f}^{r}(n)  \sigma_{1; \chi_{8}, \textbf{1}}(n)| \right) + O\left(\sideset{}{^{\flat }} \sum_{X< n< X+ Y} | \lambda_{f}^{r}(n)  \sigma_{1; \chi_{8}, \textbf{1}}(n)| \right). 
\end{split}
\end{equation*}
Moreover, by Mellin's inverse transform, we have 
\begin{equation}\label{Perron-Type}
\sideset{}{^{\flat }} \sum_{n \ge 1}  \lambda_{f}^{r}(n)  \sigma_{1; \chi_{8}, \textbf{1}}(n)  w(n)  = \frac{1}{2 \pi i} \int_{(b)} \tilde w(s) R_{r}(s)  ds, 
 \end{equation}
where $R_{r}(s)$ is defined in \eqref{R-L-function} and  $b= 2+ \epsilon$ for some arbitrarily small $ \epsilon>0.$ Since   $ \lambda_{f}^{r}(n)  \sigma_{1; \chi_{8}, \textbf{1}}(n)  \ll n^{1+\epsilon}$ for any $\epsilon>0$. Hence
\begin{equation}\label{Errorbound}
\begin{split}
  O\left(\sideset{}{^{\flat }} \sum_{~n < 2Y} | \lambda_{f}^{r}(n)  \sigma_{1; \chi_{8}, \textbf{1}}(n) | \right) + O\left(\sideset{}{^{\flat }} \sum_{~X< n< X+ Y  } \!\!\!\! | \lambda_{f}^{r}(n)  \sigma_{1; \chi_{8}, \textbf{1}}(n)| \right) = Y^{2+\epsilon} + X^{1+\epsilon}Y \ll X^{1+\epsilon}Y.
\end{split}
\end{equation}
We shift the line of integration from $\Re(s) = 2+\epsilon$ to $\Re(s) = \frac{3}{2}+\epsilon$ and apply Cauchy's residue theorem to get 
\begin{equation}\label{RHSInt}
\begin{split}
\sideset{}{^{\flat }} \sum_{n \ge 1}  \lambda_{f}^{r}(n)  \sigma_{1; \chi_{8}, \textbf{1}}(n)  w(n)  &= \frac{1}{2 \pi i} \int_{(2 + \epsilon)} \tilde w(s) R_{r}(s)  ds  \\
& =   \underset{s= 2}{\rm Res} \left(R_{r}(s ) \tilde w(s)  \right)  +  \frac{1}{2 \pi i} \int_{(3/2+\epsilon)} \tilde w(s) R_{r}(s)  ds + O(X^{-A})
\end{split}
 \end{equation}
 for sufficiently large positive constant $A$. The residue term exists only when $r$ is an even positive integer otherwise 0. Since $\tilde w(s)  \ll \frac{Y}{X^{1-\sigma}}  \left(\frac{X}{|s|Y}\right)^{m}$
for any $m \ge 0,$ so  the contribution for the integral over  $|s| \ge T = \frac{X^{1+\epsilon}}{Y}$ on the right hand side of \eqref{RHSInt} is negligibly small, i.e., $O(X^{-A})$ for any large $A>0$ if one chooses sufficiently large $m>0.$ Hence, we have 
\begin{equation*}
\begin{split}
  \frac{1}{2 \pi i} \int_{(3/2+\epsilon)} \tilde w(s) R_{r}(s )  ds  & =   \frac{1}{2 \pi i} \int_{3/2+\epsilon-iT}^{3/2+\epsilon+iT} \tilde w(s)R_{r}(s )ds + O(X^{-A}) \\
  & \ll \int_{-T}^{T} |\tilde w(3/2+\epsilon+it)| |R_{r}(3/2+\epsilon+it )| dt + O(X^{-A}) \\
  & \ll  \int_{0}^{T} \frac{X^{\frac{3}{2}+\epsilon}}{|\frac{3}{2}+\epsilon+it|} |R_{r}(3/2+\epsilon+it )| dt + O(X^{-A}) \\
  & \ll \left(\int_{0}^{1} + \int_{1}^{T}\right) \frac{X^{\frac{3}{2}+\epsilon}}{|\frac{3}{2}+\epsilon+it|} |R_{r}(3/2+\epsilon+it )|  dt + O(X^{-A}), \\
  \end{split}
 \end{equation*}
where the estimate in the last lines is obtained by substituting the bound for $\tilde w(s)$ (given in \eqref{FourierW}) when $m=1.$  We substitute the decomposition of  $R_{r}(s )$ ($R_{r}(s )  = L_{r}(s )  U_{r}(s ) $) from  \lemref{LDecomposition},  and  utilize the absolute convergence of $U_{r}(s )$ in the region  $\Re(s)> \frac{3}{2}$   to get 
\begin{equation}\label{IntE}
\begin{split}
  \frac{1}{2 \pi i} \int_{(3/2+\epsilon)} \tilde w(s) R_{r}(s )  ds  
  & \ll X^{\frac{3}{2}+\epsilon} + X^{\frac{3}{2}+\epsilon}  \int_{1}^{T} \frac{|L_{r}(3/2+\epsilon+it )|}{t}  dt.
  \end{split}
 \end{equation}
 Thus, combining all the estimates, we have  (for each fixed $r \in \mathbb{N}$)
 \begin{equation}\label{Gen-formula}
\begin{split}
\sideset{}{^{\flat }} \sum_{n \le X}  \lambda_{f}^{r}(n)  \sigma_{1; \chi_{8}, \textbf{1}}(n) 
& =  \underset{s= 2}{\rm Res} \left(R_{r}(s ) \tilde w(s)  \right)  + O\left(X^{\frac{3}{2}+\epsilon}  \int_{1}^{T} \frac{|L_{r}(3/2+\epsilon+it )|}{t}  dt \right)  \\
 & \quad +  O( X^{\frac{3}{2}+\epsilon})  + O( X^{1+\epsilon}Y) +  O(X^{-A})  
\end{split}
\end{equation}
where $T = \frac{X^{1+\epsilon}}{Y}$ and $Y$ be a suitable parameter, and the first term on RHS of \eqref{Gen-formula}  exists only when $r$ is even. So, it is enough to get an estimate for the integral $I_{r}$ (say) appearing in \eqref{Gen-formula}  to get the required estimate for the sum $S_{r}(X)$ defined in \eqref{Sum-PIBQF}.

\subsection{Proof of \thmref{PolyEst}:} 
Since the function $R_{1}(s) = L(s-1, f) L(s, f\times \chi_{8}) U_{1}(s)$ is holomorphic  at $s=2$. So, from \eqref{Gen-formula}, we have  
 \begin{equation}\label{First result}
\begin{split}
S_{1}(X) & =  O\left(X^{\frac{3}{2}+\epsilon}  \int_{1}^{T} \frac{ |L(1/2+\epsilon+it, f) L(3/2+\epsilon+it, f\times \chi_{8})|}{t}  dt \right) \\
& \quad  +  O( X^{\frac{3}{2}+\epsilon})   +  O( X^{1+\epsilon}Y) +  O(X^{-A}).  
\end{split}
\end{equation}
Following the argument using the dyadic division method and then the Cauchy-Schwarz inequality, we have 
\begin{equation*}
\begin{split}
& \int_{1}^{T} \frac{ |L(1/2+\epsilon+it, f) L(3/2+\epsilon+it, f\times \chi_{8})|}{t}  dt  \ll  \int_{1}^{T} \frac{ |L(1/2+\epsilon+it, f)|}{t}  dt \\
& \ll \log T \underset{1 \le T_{1} \le T} {\rm sup} \left(
\frac{1}{T_{1}}  \left( \int_{1}^{T_{1}}  {|L(1/2+\epsilon+it , f)|}^{2}  dt\right)^{\frac{1}{2}} \times  \left( \int_{1}^{T_{1}}   dt\right)^{\frac{1}{2}}\right)  \ll T^{\epsilon}.
 \end{split}
\end{equation*}
Thus, substituting the integral estimate  in \eqref{First result},   we have 
 \begin{equation*}
\begin{split}
S_{1}(X) & =   O( X^{\frac{3}{2}+\epsilon} T^{\epsilon})  + O( X^{1+\epsilon}Y) +  O(X^{-A}).  
\end{split}
\end{equation*}
We substitute  $T = \frac{X^{1+\epsilon}}{Y}$ and choose  $Y =X^{\frac{1}{2}+\epsilon} $ to get 
 \begin{equation*}
\begin{split}
S_{1}(X) & =   O( X^{\frac{3}{2}+\epsilon}).
\end{split}
\end{equation*} 

\noindent
In the case of $r=2$, from the \eqref{Gen-formula}, we have 
 \begin{equation*}\label{SecondResult}
\begin{split}
S_{2}(X) 
& =  \underset{s= 2}{\rm Res} \left(R_{2}(s ) \tilde w(s)  \right)  + O\left(X^{\frac{3}{2}+\epsilon}  \int_{1}^{T} \frac{|L_{2}(3/2+\epsilon+it )|}{t}  dt \right)  \\
 & \quad +  O( X^{\frac{3}{2}+\epsilon})  + O( X^{1+\epsilon}Y) +  O(X^{-A})  
\end{split}
\end{equation*}
with $ L_{2}(s)  =   \zeta(s-1) L(s, \chi_{8}) L(s-1, sym^{2}f)  L(s, sym^{2}f \times \chi_{8}).$ Let
\begin{equation*}
 \begin{split}
I_{2}=   \int_{1}^{T} \frac{|L_{2}(3/2+\epsilon+it )|}{t}  dt.
\end{split}
\end{equation*} 
Substituting the decomposition of $ L_{2}(s)$ and using the absolute convergence of respective $L$-functions, we have
\begin{equation*}
 \begin{split}
I_{2} & \ll   \int_{1}^{T} \frac{| \zeta(1/2+\epsilon+it) L(1/2+\epsilon+it, sym^{2}f)|}{t}  dt.\\
\end{split}
\end{equation*}
Appealing the  dyadic division method and then the Cauchy-Schwarz inequality, we have
\begin{equation*}
 \begin{split}
I_{2} 
& \ll \log T \underset{1 \le T_{1} \le T} {\rm sup} \left(
\frac{1}{T_{1}}  \left( \int_{1}^{T_{1}}  {|\zeta(1/2+\epsilon+it)|}^{2}  dt\right)^{\frac{1}{2}}   \left( \int_{1}^{T_{1}}  {|L(1/2+\epsilon+it , sym^{2}f)|}^{2}  dt\right)^{\frac{1}{2}} \right) \\
&   \ll T^{\frac{1}{4}+\epsilon}.\\
\end{split}
\end{equation*}
which is obtained by using integral estimates for respective $L$-functions. Thus, substituting the estimate of $I_{2}$ in \eqref{SecondResult},  we have 
 \begin{equation*}
\begin{split}
S_{2}(X) & =  \underset{s= 2}{\rm Res} \left(R_{2}(s ) \tilde w(s)  \right)  + O\left(X^{\frac{3}{2}+\epsilon} T^{\frac{1}{4}+\epsilon} \right)  + O( X^{1+\epsilon}Y) +  O(X^{-A}).  
\end{split}
\end{equation*}
We substitute  $T = \frac{X^{1+\epsilon}}{Y}$ and choose  $Y =X^{\frac{3}{5}+\epsilon} $ to get 
 \begin{equation*}
\begin{split}
S_{2}(X) & = C X^{2} + O\left(X^{\frac{8}{5}+\epsilon}\right).
\end{split}
\end{equation*}
where $C$ is a positive absolute constant given by 
$$ C =  L(2, \chi_{8}) L(1, sym^{2}f)  L(2, sym^{2}f \times \chi_{8}) U_{2}(2).$$
This completes the proof.

\subsection{Proof of \thmref{Estimate-TR}:} For each fixed $r \in \mathbb{N}$, following the argument as in \S3.1, it is enough to obtain an estimate for the integral occurring in \eqref{Gen-formula}  get an estimate for the sum  $S_{r}(X)$ given in  \eqref{Sum-PIBQF}. Let
\begin{equation*}
 \begin{split}
I_{r}=   \int_{1}^{T} \frac{|L_{r}(3/2+\epsilon+it )|}{t}  dt.
\end{split}
\end{equation*} 
 We substitute $L_{r}(s)$ from \lemref{LDecomposition} to get 
 \begin{equation*}
 \begin{split}
I_{r} &=   \int_{1}^{T}  \frac{dt}{t  } \times  \displaystyle{\prod_{n=0}^{[r/2]}} 
\left(
{ \left|L \left( \frac{1}{2}+\epsilon+it , sym^{r-2n}f \right) L\left( \frac{3}{2}+\epsilon+it , sym^{r-2n}f \times \chi_{8} \right) \right|}^{\left({r \choose n}- {r \choose {n-1}}\right)}  
\right) \\
\quad & \ll\int_{1}^{T}  \frac{1}{t  } \times \displaystyle{\prod_{n=0}^{[r/2]}} {\left|L\left( \frac{1}{2}+\epsilon+it , sym^{r-2n}f \right)\right|}^{\left({r \choose n}- {r \choose {n-1}}\right)}  dt
\end{split}
\end{equation*} 
where we use the fact that $L(s , sym^{\ell}f)$ converges absolutely for $\Re(s) >1$ for each $\ell \ge 0$. We consider two cases when $r$ is even and $r$ is odd separately.

\smallskip
\noindent
\textbf{Case 1:}  Let $r$ is even, i.e., $r =2m$(say). Then 
 \begin{equation*}
 \begin{split}
I_{r} 
& \ll\int_{1}^{T}  \frac{1}{t  } \times| {\displaystyle{\prod_{n=0}^{m}} {L(1/2+\epsilon+it , sym^{2m-2n}f)}^{\left({2m \choose n}- {2m \choose {n-1}}\right)} |}  dt \\
& \ll  \underset{1 \le t \le T} {\rm max}\left( \zeta(1/2 +\epsilon +it)^{ \frac{1}{m} {2m \choose m-1}}  {L(1/2+\epsilon+it , sym^{2}f)}^{\frac{3}{m-1}{2m \choose m-2}}  \right) \\
& \qquad  \times \log T  \underset{1 \le T_{1} \le T} {\rm sup}\left( \frac{1}{T_{1}  }  \int_{1}^{T_{1}}   | {\displaystyle{\prod_{n=0}^{m-2}} {L(1/2+\epsilon+it , sym^{2m-2n}f)}^{\frac{2m-2n+1}{n}{2m \choose n-1}} }|  dt \right) \\
& \ll T^{\gamma_{r}+\epsilon}
\end{split}
\end{equation*} 
where we use the convexity/sub-convexity  bound of respective $L$-functions to get an upper estimate for $I_{r}$, and  $\gamma_{r} = \frac{13}{82m} {2m \choose m-1 } + \frac{15}{8(m-1)} {2m \choose m-2 } + \frac{1}{4} \left[ \displaystyle{\sum_{n=0}^{m-2}\frac{(2m-2n+1)^{2}}{n} {2m \choose n-1}} \right]$. 
We substitute the bound for $I_{r}$ in  \eqref{Gen-formula} to get (for each $r \in \mathbb{N}$)
 \begin{equation*}
\begin{split}
\sideset{}{^{\flat }} \sum_{n \le X}  \lambda_{f}^{r}(n)  \sigma_{1; \chi_{8}, \textbf{1}}(n) 
& =  \underset{s= 2}{\rm Res} \left(R_{r}(s ) \tilde w(s)  \right)  + O\left(X^{\frac{3}{2}+\epsilon}  T^{\gamma_{r}+\epsilon} \right)  \\
 & \quad +  O( X^{\frac{3}{2}+\epsilon})  + O( X^{1+\epsilon}Y) +  O(X^{-A}).  
\end{split}
\end{equation*}
We substitute  $T = \frac{X^{1+\epsilon}}{Y}$ and choose  $Y =X^{1-\frac{1}{2(1+\gamma_{r})}+\epsilon} $ to get 
 \begin{equation*}
\begin{split}
\sideset{}{^{\flat }} \sum_{n \le X}  \lambda_{f}^{r}(n)  \sigma_{1; \chi_{8}, \textbf{1}}(n) 
& = X^{2} P_{r}(\log X) +  O (X^{2-\frac{1}{2(1+\gamma_{r})}+\epsilon})  
\end{split}
\end{equation*}
where $P_{r}(t)$ is a polynomial of degree $d_{r} = \frac{1}{m} {2m \choose m-1} -1$ and $\gamma_{r}$
is given in \thmref{Estimate-TR}. This completes the proof for even $r$.

\bigskip
\noindent
\textbf{Case 2:} Let  $r$ is odd, i.e., $r =2m+1$(say). Then, first using the dyadic division method and Cauchy-Schwarz inequality to get  
 \begin{equation*}
 \begin{split}
I_{r} 
& \ll\int_{1}^{T}  \frac{dt}{t  } \times {\displaystyle{\prod_{n=0}^{m}}} {|L(1/2+\epsilon+it , sym^{2m+1-2n}f)|}^{\left({2m+1 \choose n}- {2m+1 \choose {n-1}}\right)}   \\
&  \ll  \log T  \underset{1 \le T_{1} \le T} {\rm sup} \begin{cases}
  \int_{1}^{T_{1}} \left( {|L(1/2+\epsilon+it , f)|}^{2\times \frac{2}{m}{2m+1 \choose m-1}}  dt\right)^{\frac{1}{2}} \times \\
   \left(  \frac{1}{T_{1}  }  \int_{1}^{T_{1}}  {\displaystyle{\prod_{n=0}^{m-1}}} {|L(1/2+\epsilon+it , sym^{2m+1-2n}f)|}^{2 \times \frac{2m+1-2n+1}{n}{2m \choose n-1}}   dt \right)^{\frac{1}{2}} \\
\end{cases} \\
&  \ll  \log T  \underset{1 \le T_{1} \le T} {\rm sup} \begin{cases}
 \underset{1 \le t \le T_{1}} {\rm max}\left( {|L(1/2+\epsilon+it , f)|}^{\frac{2}{m}{2m+1 \choose m-1} -1}  \right)  \int_{1}^{T_{1}} \left( {|L(1/2+\epsilon+it , f)|}^{2}  dt\right)^{\frac{1}{2}} \\
  \times \left(  \frac{1}{T_{1}  }  \int_{1}^{T_{1}}  \displaystyle{\prod_{n=0}^{m-1}} {|L(1/2+\epsilon+it , sym^{2m+1-2n}f)|}^{2 \times \frac{2m+1-2n+1}{n}{2m \choose n-1}}   dt \right)^{\frac{1}{2}}\\
\end{cases} \\
&\ll T^{\gamma_{r}+\epsilon}
\end{split}
\end{equation*}
which is obtained using convexity/sub-convexity bound and integral estimate of respective $L$-functions, and  $\gamma_{r} = \frac{2}{3m} {2m+1 \choose m-1 }  + \frac{1}{4} \left[ \displaystyle{\sum_{n=0}^{m-1}\frac{(2m+1-2n+1)^{2}}{n} {2m+1 \choose n-1}} \right]-\frac{5}{6}$. 
We know that $L_{r}(s)$ (for odd integer  $r$) does not have a pole. So, substituting the bound for $I_{r}$ in  \eqref{Gen-formula} to get
 \begin{equation*}
\begin{split}
\sideset{}{^{\flat }} \sum_{n \le X}  \lambda_{f}^{r}(n)  \sigma_{1; \chi_{8}, \textbf{1}}(n) 
& =   O\left(X^{\frac{3}{2}+\epsilon}  T^{\gamma_{r}+\epsilon} \right)  +  O( X^{\frac{3}{2}+\epsilon})   + O (Y^{2+\epsilon}) + O( X^{1+\epsilon}Y) +  O(X^{-A}). 
\end{split}
\end{equation*}
We substitute  $T = \frac{X^{1+\epsilon}}{Y}$ in above equation  and choose  $Y =X^{1-\frac{1}{2(1+\gamma_{r})}+\epsilon} $ to get 
 \begin{equation*}
\begin{split}
\sideset{}{^{\flat }} \sum_{n \le X}  \lambda_{f}^{r}(n)  \sigma_{1; \chi_{8}, \textbf{1}}(n) 
& =  O (X^{2-\frac{1}{2(1+\gamma_{r})}+\epsilon}).  
\end{split}
\end{equation*}
This completes the proof for odd $r$.

\smallskip
\textbf{Acknowledgement :}The author would like to thank IMSc, Chennai for providing financial support through institute fellowship.

\noindent
\section{Declarations:}
\noindent
\textbf{Ethical Approval:} Not applicable.\\
\textbf{Competing interests:} Not applicable.\\
\textbf{Author's contributions:} Not applicable.\\
\textbf{Funding:} Not applicable. \\
\textbf{Availability of data and materials:} This manuscript does not include any data. \\


\begin{thebibliography}{0}

\bibitem{Ratna}
R. Acharya, {\it A twist of the Gauss circle problem by holomorphic cusp forms}, Res. Number Theory 8, 5 (2022).

\bibitem{manish-soumya}
S. Banerjee and M. K. Pandey, {\it Signs of Fourier coefficients of cusp form at sum of two squares},  Proc. Indian Acad. Sci. Math. Sci., Vol \textbf{ 130}, no 2 (2020).

\bibitem{blomer}
V. Blomer, {\it Sum of Hecke eigenvalues over values of quadratic polynomials}, Int. Math. Res. Not. IMRN (2008). 


\bibitem{Bourgain}
J. Bourgain,  {\it Decoupling, exponential sums and the Riemann zeta function,} J. Amer. Math. Soc. (2017),  205--224.




 \bibitem{Cog-Mic}  J. Cogdell and P. Michel, {\it On the complex moments of symmetric power $L$-functions at $s = 1$}, Int. Math. Res. Not. {\bf 31} (2004), 1561--1617.






 


\bibitem{AGood}
 A. Good, {\it The square mean of Dirichlet series associated with cusp forms}, Mathematika {\bf 29 (2)} (1982), 278--295.
 
\bibitem{HBrown} D. R. Heath-Brown, {\it The growth rate of the Dedekind zeta-function on the critical line, Acta Arith.}, {\bf 49 (4)} (1988), 323--339.



\bibitem{Ivic}
 A. Iv{\' i}c, {\it Exponent pairs and the zeta function of Riemann},  Studia Sci. Math. Hungar. (1980), 157--181. 

\bibitem{AIvic}
 A. Iv{\' i}c,  {\it On zeta-functions associated with Fourier coefficients of cusp forms,}  Proceedings of the Amalfi Conference on Analytic Number Theory (Maiori, 1989),  231--246. 


 
\bibitem{Iwaniec}
 H. Iwaniec, {\rm Topics in Classical Automorphic Forms}, Grad. Stud. Math.,  {\bf vol. 17}, American Mathematical Society, Providence, RI, 1997.
 
\bibitem{HIwaniec}
 H. Iwaniec and E. Kowalski, {\rm Analytic Number Theory}, Amer. Math. Soc. Colloq. Publ.,  {\bf vol. 53}, American Mathematical Society, Providence, RI, 2004.

 
 

  
  






\bibitem{YJGL}
Y. Jiang and G. Lu, {\it Sum of coefficients of $L$- functions and applications}, J. Number Theory, {\bf 171} (2017), 56--70.





\bibitem{G. Lu1}
 G. L\"{u}, {\it The sixth and eighth moments of Fourier coefficients of cusp forms}, J. Number Theory  {\bf 129 (11)} (2009), 2790--2800.








\bibitem{Newton}
 J. Newton and J.A. Thorne, { \it Symmetric power functoriality for holomorphic modular forms,}  Publ. math. IHES {\bf  134} (2021), 1–116. 
 
 \bibitem{NewThorne}
 J. Newton and J.A. Thorne,  {\it Symmetric power functoriality for holomorphic modular forms, II}, Publ. math. IHES {\bf 134} (2021), 117–152. 
 
 \bibitem{Nunes}
R. M. Nunes, {\it  Subconvexity for $GL(3)$ L-functions}. arXiv e-prints, 2017.
 arXiv:1703.04424.
 


\bibitem{Ramki-Lalit}
B. Ramakrishnan and Lalit Vaishya, {\it Figurate numbers, forms of mixed type and their representation numbers}, Communicated-2023. https://arxiv.org/abs/2302.00964




 





\bibitem{Lalit} 
Lalit Vaishya, {\it Signs of Fourier coefficients of cusp forms at integers represented by an integral binary quadratic form},  Proc. Indian Acad. Sci.( Math. Sci.), \textbf{Vol.} 131, \textbf{No.} 41 (2021).


\bibitem{LalitV}
Lalit Vaishya, {\it  Average estimates and sign change of Fourier coefficients of Hecke eigenforms at integers represented by binary quadratic forms of fixed discriminant},  Communicated  (2021).  https://arxiv.org/abs/2204.08310

\bibitem{Manish-Lalit}
 Lalit Vaishya and Manish Kumar Pandey, {\it Asymptotic of  Fourier coefficients of Hecke eigenforms at integers represented by binary quadratic form of fixed discriminant }, (revision submitted), 2023. 



\end{thebibliography}
\end{document}